 \font \eightrm=cmr8
\def\diagramme #1{\vskip 4mm \centerline {#1} \vskip 4mm}
 \newcommand{\nc}{\newcommand}
 \nc{\butcher}{{\scriptstyle\ \circleright\ }}
 \nc{\dcdot}{{\raise 0.5pt\hbox{\bf .}}}
\nc{\surj}{\to\hskip -3mm \to}
\nc{\urhd}{\,\underline\rhd\,}
\nc{\ulhd}{\,\underline\lhd\,}
\newtheorem{thm}{Theorem}
\newtheorem{cor}[thm]{Corollary}
\newtheorem{lem}[thm]{Lemma}
\newtheorem{prop}[thm]{Proposition}
\newtheorem{rmk}[thm]{Remark}
\newcommand{\tree}{\hskip 0.8pc\scalebox{-0.3}{{\parbox{0.5pc}{
  \begin{picture}(30,45) (75,-60)
    \SetWidth{1.5}
    \SetColor{Black}
    \Line(90,-30)(75,-60)
    \Line(90,-30)(105,-60)
    \Line(90,-15)(90,-30)
  \end{picture}}}}}
\newcommand{\treeA}{\hskip 1.5pc\scalebox{-0.3}{{\parbox{0.5pc}{
   \begin{picture}(60,75) (75,-30)
    \SetWidth{1.5}
    \SetColor{Black}
    \Line(90,0)(75,-30)
    \Line(90,0)(105,-30)
    \Line(105,30)(90,0)
    \Line(105,30)(135,-30)
    \Line(105,45)(105,30)
  \end{picture}}}}}
\newcommand{\treeB}{\hskip 1.5pc\scalebox{-0.3}{{\parbox{0.5pc}{
  \begin{picture}(60,75) (75,-30)
    \SetWidth{1.5}
    \SetColor{Black}
    \Line(90,0)(75,-30)
    \Line(105,30)(135,-30)
    \Line(105,45)(105,30)
    \Line(120,0)(105,-30)
    \Line(105,30)(90,0)
  \end{picture}}}}}
\newcommand{\treeC}{\hskip 1.5pc\scalebox{-0.2}{{\parbox{0.5pc}{
 \begin{picture}(90,105) (75,-30)
    \SetWidth{2.9}
    \SetColor{Black}
    \Line(90,0)(75,-30)
    \Line(120,60)(90,0)
    \Line(90,0)(105,-30)
    \Line(105,30)(135,-30)
    \Line(120,60)(165,-30)
    \Line(120,75)(120,60)
  \end{picture}
}}}}
\newcommand{\treeD}{\hskip 1.5pc\scalebox{-0.2}{{\parbox{0.5pc}{
 \begin{picture}(90,105) (75,-30)
    \SetWidth{2.9}
    \SetColor{Black}
    \Line(90,0)(75,-30)
    \Line(120,60)(90,0)
    \Line(90,0)(105,-30)
    \Line(120,60)(165,-30)
    \Line(120,75)(120,60)
    \Line(150,0)(135,-30)
  \end{picture}
}}}}
\newcommand{\treeE}{\hskip 1.5pc\scalebox{-0.2}{{\parbox{0.5pc}{
 \begin{picture}(90,105) (75,-30)
    \SetWidth{2.9}
    \SetColor{Black}
    \Line(90,0)(75,-30)
    \Line(120,60)(90,0)
    \Line(120,60)(165,-30)
    \Line(120,75)(120,60)
    \Line(150,0)(135,-30)
    \Line(135,30)(105,-30)
  \end{picture}
}}}}
\newcommand{\treeF}{\hskip 1.5pc\scalebox{-0.2}{{\parbox{0.5pc}{
 \begin{picture}(90,105) (75,-30)
    \SetWidth{2.9}
    \SetColor{Black}
    \Line(90,0)(75,-30)
    \Line(120,60)(90,0)
    \Line(120,60)(165,-30)
    \Line(120,75)(120,60)
    \Line(135,30)(105,-30)
    \Line(120,0)(135,-30)
  \end{picture}
}}}}
\newcommand{\treeG}{\hskip 1.5pc\scalebox{-0.2}{{\parbox{0.5pc}{
 \begin{picture}(90,105) (75,-30)
    \SetWidth{2.9}
    \SetColor{Black}
    \Line(90,0)(75,-30)
    \Line(120,60)(90,0)
    \Line(120,60)(165,-30)
    \Line(120,75)(120,60)
    \Line(105,30)(135,-30)
    \Line(120,0)(105,-30)
  \end{picture}
}}}}
\newcommand{\treebc}{\scalebox{0.21}{{
    \begin{picture}(100,105) (100,-90)
    \SetWidth{2.0}
    \SetColor{Black}
    \Line(150,-135)(150,-150)
    \Line(105,-45)(150,-135)
    \Line(195,-45)(150,-135)
    \Line(165,-45)(165,-105)
    \Line(135,-45)(165,-105)
  \end{picture}
}}}
\newcommand{\treec}{\scalebox{0.3}{{\parbox{0.5pc}{
  \begin{picture}(60,75) (135,-140)
    \SetWidth{2.0}
    \SetColor{Black}
    \Line(165,-75)(165,-120)
    \Line(135,-75)(165,-135)
    \Line(165,-105)(165,-150)
    \Line(195,-75)(165,-135)
  \end{picture}
}}}}
\newcommand{\treebcbis}{\scalebox{0.08}{{\parbox{0.5pc}{
  \begin{picture}(175,309) (78,-91)
    \SetWidth{5}
    \SetColor{Black}
    \Line(78,218)(120,115)
    \Line(120,116)(162,13)
    \Line(162,12)(162,-91)
    \Line(253,216)(162,13)
    \Line(162,218)(162,16)
    \Line(128,218)(120,116)
  \end{picture}
}}}}
\newcommand{\treecbsym}{\scalebox{0.08}{{\parbox{0.5pc}{
 \begin{picture}(175,309) (78,-91)
    \SetWidth{5}
    \SetColor{Black}
    \Line(78,218)(120,115)
    \Line(120,116)(162,13)
    \Line(162,12)(162,-91)
    \Line(253,216)(162,13)
    \Line(187,218)(163,116)
    \Line(140,218)(161,115)
    \Line(162,116)(162,16)
  \end{picture}
}}}}
\newcommand{\treecb}{\scalebox{0.08}{{\parbox{0.5pc}{
\begin{picture}(175,309) (78,-91)
    \SetWidth{5}
    \SetColor{Black}
    \Line(120,218)(120,115)
    \Line(78,218)(120,115)
    \Line(162,218)(120,115)
    \Line(120,116)(162,13)
    \Line(162,12)(162,-91)
    \Line(253,216)(162,13)
  \end{picture}
}}}}
\newcommand{\treecbbis}{\scalebox{0.08}{{\parbox{0.5pc}{
\begin{picture}(175,309) (78,-91)
    \SetWidth{5}
    \SetColor{Black}
    \Line(78,218)(120,115)
    \Line(120,116)(162,13)
    \Line(162,12)(162,-91)
    \Line(253,216)(162,13)
    \Line(162,218)(162,16)
    \Line(199,217)(207,114)
  \end{picture}
}}}}  
 \newcommand{\treed}{\scalebox{0.075}{{\parbox{0.5pc}{ 
\begin{picture}(272,343) (134,-119)
    \SetWidth{5}
    \SetColor{Black}
    \Line(406,222)(270,14)
    \Line(315,224)(270,15)
    \Line(225,222)(270,17)
    \Line(270,-119)(270,14)
    \Line(134,224)(271,15)
  \end{picture}
}}}}
  \def\surjection{\,{\scalebox{0.6}{
  \begin{picture}(418,184) (287,-236)
    \SetWidth{1.0}
    \SetColor{Black}
    \Line(376,-191)(376,-236)
    \Line(376,-191)(287,-71)
    \Line(354,-160)(328,-71)
    \Line(376,-191)(445,-71)
    \Line(393,-160)(358,-71)
    \Line(370,-101)(390,-71)
    \Line(411,-130)(413,-71)
    \Line(376,-191)(548,-71)
    \Line(420,-160)(481,-71)
    \Line(460,-102)(515,-71)
    \Line(376,-191)(586,-162)
    \Line(585,-71)(625,-102)
    \Line(586,-161)(705,-71)
    \Line(585,-162)(626,-102)
    \Line(616,-71)(625,-102)
    \Line(625,-102)(656,-71)
    \Text(314,-76)[lb]{\Large{\Black{$3$}}}
    \Text(528,-76)[lb]{\Large{\Black{$3$}}}
    \Text(565,-78)[lb]{\Large{\Black{$[4]$}}}
    \Text(597,-78)[lb]{\Large{\Black{$[1]$}}}
    \Text(630,-76)[lb]{\Large{\Black{$1$}}}
    \Text(671,-76)[lb]{\Large{\Black{$3$}}}
    \Text(337,-78)[lb]{\Large{\Black{$[4]$}}}
    \Text(369,-76)[lb]{\Large{\Black{$1$}}}
    \Text(394,-78)[lb]{\Large{\Black{$[3]$}}}
    \Text(425,-76)[lb]{\Large{\Black{$2$}}}
    \Text(459,-78)[lb]{\Large{\Black{$[4]$}}}
    \Text(490,-76)[lb]{\Large{\Black{$1$}}}
  \end{picture}
}}\,}
\nc{\ignore}[1]{{}}
\nc{\mrm}[1]{{\rm #1}}
\nc{\dirlim}{\displaystyle{\lim_{\longrightarrow}}\,}
\nc{\invlim}{\displaystyle{\lim_{\longleftarrow}}\,}
\nc{\vep}{\varepsilon} \nc{\ep}{\epsilon}
\nc{\sigmat}{\widetilde\sigma}
\nc{\ostar}{\overline{*}}
\nc{\mchar}{\mrm{Char}}
\nc{\Hom}{\mrm{Hom}}
\nc{\id}{\mrm{id}}
\nc{\remark}{\noindent{\bf{Remark:}}}
\nc{\remarks}{\noindent{\bf{Remarks:}}}
 \nc{\delete}[1]{}
 \nc{\grad}[1]{^{({#1})}}
 \nc{\fil}[1]{_{#1}}
\nc{\BA}{{\Bbb A}} \nc{\CC}{{\Bbb C}} \nc{\DD}{{\Bbb D}}
\nc{\EE}{{\Bbb E}} \nc{\FF}{{\Bbb F}} \nc{\GG}{{\Bbb G}}
\nc{\HH}{{\Bbb H}} \nc{\LL}{{\Bbb L}} \nc{\NN}{{\Bbb N}}
\nc{\PP}{{\Bbb P}} \nc{\QQ}{{\Bbb Q}} \nc{\RR}{{\Bbb R}}
\nc{\TT}{{\Bbb T}} \nc{\VV}{{\Bbb V}} \nc{\ZZ}{{\Bbb Z}}
\nc{\Cal}[1]{{\mathcal {#1}}}
\nc{\mop}[1]{\mathop{\hbox {\rm #1} }\nolimits}
\nc{\mopg}[1]{\mathop{\hbox {\bf #1} }\nolimits}
\nc{\smop}[1]{\mathop{\hbox {\eightrm #1} }\nolimits}
\nc{\mopl}[1]{\mathop{\hbox {\rm #1} }\limits}
\nc{\frakg}{{\frak g}}
\nc{\g}[1]{{\frak {#1}}}
\def \restr#1{\mathstrut_{\textstyle |}\raise-8pt\hbox{$\scriptstyle #1$}}
\def \srestr#1{\mathstrut_{\scriptstyle |}\hbox to
  -1.5pt{}\raise-4pt\hbox{$\scriptscriptstyle #1$}}
\nc{\wt}{\widetilde}
\nc{\wh}{\widehat}
\nc{\un}{\hbox{\bf 1}}
\nc{\redtext}[1]{\textcolor{red}{\tt #1}}
\nc{\bluetext}[1]{\textcolor{blue}{#1}}
\nc{\comment}[1]{[[{\tt {#1}}]] }
\nc{\R}{\mathbb R}
\nc\fleche[1]{\mathop{\hbox to #1 mm{\rightarrowfill}}\limits}
\def\semi{\mathrel{\times}\kern -.85pt\joinrel\mathrel{\raise
    1.4pt\hbox{${\scriptscriptstyle |}$}}}
\nc{\np}{/\hskip -2.3mm\pi}
\nc{\snp}{/\hskip -1.8mm\pi}
\begin{document}


\title[A closed formula for a discrete Magnus Expansion]
      {The tridendriform structure of a discrete Magnus expansion}

\author{Kurusch Ebrahimi-Fard}
\address{Instituto de Ciencias Matem\'aticas,
		C/ Nicol\'as Cabrera, no.~13-15, 28049 Madrid, Spain.
		On leave from Univ.~de Haute Alsace, Mulhouse, France.}
         \email{kurusch@icmat.es, kurusch.ebrahimi-fard@uha.fr}         
         \urladdr{www.icmat.es/kurusch}

\author{Dominique Manchon}
\address{Univ.~Blaise Pascal,
         	C.N.R.S.-UMR 6620,
         	63177 Aubi\`ere, France.}       
         \email{manchon@math.univ-bpclermont.fr}
         \urladdr{http://math.univ-bpclermont.fr/$\sim$manchon/}

\date{April 23, 2013}

\begin{abstract}
The notion of trees plays an important role in Butcher's B-series. More recently, a refined understanding of algebraic and combinatorial structures underlying the Magnus expansion has emerged thanks to the use of rooted trees. We follow these ideas by further developing the observation that the logarithm of the solution of a linear first-order finite-difference equation can be written in terms of the Magnus expansion taking place in a pre-Lie algebra. By using basic combinatorics on planar reduced trees we derive a closed formula for the Magnus expansion in the context of free tridendriform algebra. The tridendriform algebra structure on word quasi-symmetric functions permits us to derive a discrete analogue of the Mielnik--Pleba\'nski--Strichartz formula for this logarithm.
\end{abstract}

\maketitle

\begin{quote}
{\small{{\bf{key words}}: Magnus expansion; $B$-series; trees; pre-Lie algebra; tridendriform algebra, Rota--Baxter algebra; word quasi-symmetric functions; surjections.}}
\end{quote}


\tableofcontents


\section{Introduction}
\label{sect:intro}

In many areas of the mathematical sciences linear initial value problems (IVP) play an essential role. Recall that such a linear IVP basically consists of a first order linear differential equation
\begin{equation}
\label{eq:ivp}
	\dot{Y}(t)= A(t)Y(t),
\end{equation}
together with the initial value $Y(0) = Y_0$. The function $A(t)$ may be matrix or operator valued. It is common to write the solution of such an IVP in terms of the time-ordered exponential, $Y(t) = {\mathsf{T}}\!\exp\bigl(\int_0^t A(s)ds\bigr)Y_0$. Indeed, using the definition of the time-ordering operator ${\mathsf{T}}$ at distinct times $s_1,\ldots, s_n$
$$
	{\mathsf{T}}[U_1(s_1) \cdots U_n(s_n)]:=U_{\sigma(1)}(s_{\sigma(1)}) \cdots U_{\sigma(n)}(s_{\sigma(n)}),
$$
where $\sigma$ is the unique permutation such that $s_{\sigma(1)} < \cdots < s_{\sigma(n)}$, the function $Y(t)$ results as the formal solution
\begin{equation}
\label{eq:Torder} 
	{\mathsf{T}}\!\exp\Bigl(h\int_0^t A(s)ds\Bigr)Y_0 = Y_0\un + \sum_{n>0} \frac{h^n}{n!} \int_{[0,t]^n} {\mathsf{T}}[A(t_1) \cdots A(t_n)]dt_1 \cdots dt_n Y_0
\end{equation}
of the linear integral equation
\begin{equation}
\label{eq:fixpoint} 
	Y(t)=Y_0+ h\int_0^tA(s)Y(s)ds
\end{equation}
corresponding to (\ref{eq:ivp}). We have introduced the formal parameter $h$ for convenience. The first few terms are
\begin{equation}
\label{eq:TorderExplicit} 
	 Y(t)=  \Big(\un + h\int_0^t \!\!\!\!A(x_1) dx_1
	   + h^2 \int_0^t \!\!\!\!A(x_1)\!\!\int_0^{x_1} \!\!\!\! A(x_2) dx_2dx_1
	   + h^3\int_0^t \!\!\!\!A(x_1)\!\!\int_0^{x_1} \!\!\!\! A(x_2) \!\! \int_0^{x_2} \!\!\!\!A(x_3) dx_3dx_2dx_1 + \cdots\Big)Y_0 .
\end{equation}

The solution $Y(t)$ of (\ref{eq:ivp}) can also be written as a proper exponential. However, in general we can not expect that $Y(t) = \exp\bigl(h\int_0^t A(s)ds\bigr)Y_0$. Indeed, trying to re-arrange the coefficient of the second order term in $h$ yields
\begin{equation}
\label{eq:RB0}
	\frac{h^2}{2!}\int_0^t A(s)ds\int_0^t A(u)du 
	= \frac{h^2}{2!}\int_0^t \Big( \int_0^s A(u)du\Big)A(s)ds + \frac{h^2}{2!}\int_0^t A(s)\Big(\int_0^s A(u)du\Big)ds.
\end{equation}
Looking at the first term on the right-hand side, we see that the iterated integral is in ``bad" order, which means that the right-hand side does not add up to the $h^2$-order term in (\ref{eq:TorderExplicit}), namely $\big(\int_0^t A(s)ds\big)^2 \not\eq 2\int_0^t A(s)\int_0^s A(u)duds$. One may try to resolve this problem using the following simple ansatz. Introduce functions $\Omega_i(t)$, such that 
\begin{equation}
\label{eq:magnusCorr}
	Y(t) =  \exp\left(h\int_0^t A(s)ds + \sum_{i>1}h^i \Omega_i(t)\right)Y_0.
\end{equation}
Returning to (\ref{eq:RB0}), one verifies quickly that $\Omega_2(t) := -\frac{1}{2} \int_0^t \big[ \int_0^s A(u)du,A(s)\big]ds$ does the job -- up to order $h^2$. Indeed, observe that the unwanted term in (\ref{eq:RB0}) is canceled
\begin{eqnarray*}
	\frac{h^2}{2!}\Big(\int_0^t A(s)ds\Big)^2 -\frac{h^2}{2} \int_0^t \big[ \int_0^s A(u)du,A(s)\big]ds 
	&=& \frac{h^2}{2!}\int_0^t \int_0^s A(u)duA(s)ds + \frac{h^2}{2!}\int_0^t A(s)\int_0^s A(u)duds\\
	& &  -\frac{h^2}{2}\int_0^t \int_0^s A(u)duA(s)ds + \frac{h^2}{2}\int_0^t A(s)\int_0^s A(u)duds\\
	&=& h^2\int_0^t A(s)\int_0^s A(u)duds.
\end{eqnarray*}
It is clear that the introduction of this order $h^2$ correction term, $\Omega_2(t)$, will contribute at higher orders $h^n$, for $n>2$, which we have to take into account when calculating the function $\Omega_n(t)$. More generally, the function $\Omega_n(t)$ will depend on $\Omega_i(t)$, $0<i \leq n-1$.\\

The solution to this formidable rewriting problem was presented by Wilhelm Magnus in his seminal 1954 paper \cite{Magnus}, where he proposed for the logarithm of the time-ordered exponential, i.e., the logarithm of the formal series of iterated integrals (\ref{eq:TorderExplicit})
$$
	\Omega(hA)(t):=\log\Big({\mathsf{T}}\!\exp\bigl(h\int_0^t A(s)ds\bigr)\Big)
$$ 
(we assume $Y_0=\un$) a particular differential equation 
\begin{equation}
\label{eq:MagCl}
	\dot{\Omega}(hA)(t) 
	=hA(t) +  \sum_{n>0} \frac{B_n}{n!} ad_{\int_0^t \dot{\Omega}(hA)(s)ds}^{(n)}\big(hA(t)\big) 
	= \frac{ad_{\Omega(hA)}}{e^{ad_{\Omega(hA)}} - 1}\big(hA(t)\big),
\end{equation}
with $\Omega(hA)(0)=0$, and $\Omega(hA)(t)= \sum_{i>0}h^i \Omega_i(A;t)$,  $\Omega_1(A;t)=\int_0^tA(s)ds$. The $B_n$ are the Bernoulli numbers, and, as usual, the $n$-fold iterated Lie bracket is denoted by $ad^{(n)}_a(b):=[a,[a,\cdots [a,b]]\cdots]$. Note that in the literature one also finds the following notation $\dot{\Omega}(hA)(t)={\mathrm{d}}\!\exp^{-1}_{\Omega(hA)}(hA)$. See for instance Theorem 4 in \cite{BCOR}. The solution of the IVP (\ref{eq:ivp}) is then given by
\begin{equation}
\label{magclas}
	Y(t)=\exp\big(\Omega(hA)(t)\big)Y_0.
\end{equation}
Let us write down the first few terms of $\dot{\Omega}(hA)(s)$, following from Picard iteration
\begin{eqnarray*}
	h A(s) - h^2 \frac{1}{2}\left[\!\int_0^s\!\!\!\!A(x)dx ,A(s)\right]
		+h^3 \frac{1}{4}\left[\!\int_0^s\!\!\Big[\int_0^y\!\!\!\!A(x)dx ,A(y)\Big]dy,A(s)\right]
		 +h^3 \frac{1}{12}\left[\!\int_0^s\!\!\!\!A(x)dx,\Big[\!\int_0^s\!\!\!\!A(y)dy ,A(s)\Big]\right] + O(h^4).
\end{eqnarray*}

Magnus' seminal paper triggered much progress in both applied mathematics and physics. In the authoritative reference \cite{BCOR} the reader may find much more information. We should also remark that the presentation of Magnus' expansion given above is rather formal, since we have deliberately ignored aspects of convergence. The reason for this more algebraic approach to Magnus' expansion will become clear in the sequel. The principal purpose of it is to show that (\ref{eq:MagCl}) is just a particular case of a more general expansion that allows to solve fixed point equations like (\ref{eq:fixpoint}) in a far more general context than just the one given by IVPs.\\      

For the last 30 years or so, rooted trees play a central role in the theory of Butcher's B-series \cite{Butcher1, Butcher2,HW}. In the recent works \cite{Celledoni,ChaMu,Murua}, including the standard reference \cite{HLW}, the reader may find more details on the use of trees in numerical integration methods. Iserles and N{\o}rsett \cite{Iserles1} were the first to make extensive use of rooted trees to obtain a deeper understanding of the workings of Magnus' expansion. Included in the review article \cite{Iserles2} the reader can find a  comprehensive summary of the work of Iserles and N{\o}rsett. A very readable account on the use of rooted trees for Magnus' series can be found in \cite{Iserles3}.
 
\smallskip

In \cite{EM1,EM2} we started to explore the genuine pre-Lie algebra structure underlying Magnus' expansion. Two key observations form the basis for our approach. First, note that the basic building block in (\ref{eq:MagCl}), i.e., the Lie bracket with the integral operator on one side
$$
	\left[\int_0^s\!\!\!\!A(x)dx ,B(s)\right]  =: (A \rhd B)(s)
$$
defines a non-commutative binary product for, say, matrix valued functions $A,B$. It is easy to see that this product is non-associative. Indeed, it satisfies what is well-known as the {\textit{left pre-Lie identity}} \cite{Burde,ChaLiv,Manchon,Segal} 
$$
  (A \rhd B) \rhd C - A \rhd (B \rhd C)
    = (B \rhd A) \rhd C - B \rhd  (A \rhd C).
$$
This relation reflects the combination of integration by parts and the Jacobi identity. The second observation is based on expanding this Lie bracket, $\left[\int_0^s A(x)dx ,B(s)\right]=\int_0^s A(x)dx B(s) - B(s)\int_0^s A(x)dx$. One then shows by using integration by parts that the two binary non-associative products
\begin{equation}
\label{dend1}
	(A \succ B)(s) := \int_0^s A(x)dx B(s) \qquad\ (A \prec B )(s):= A(s) \int_0^s B(x)dx
\end{equation}
satisfy a non-commutative shuffle like structure \cite{AG2}, which is known as dendriform algebra \cite{Loday}. Going back to (\ref{eq:TorderExplicit}), we see that the iteration of the second product in (\ref{dend1}) yields the basic operation in the formal solution of (\ref{eq:ivp}). The iteration of the first operation analogously corresponds to the formal solution of $\dot{Y}(t)=Y(t)A(t)$. Hence, we see that these non-associative, non-commutative binary products reflect well the basic operations for solving linear IVPs.

With this in mind, let us return to (\ref{eq:MagCl}). In terms of the pre-Lie product $(\dot{\Omega}\rhd A)(s)=ad_{\int_0^s\! \dot{\Omega}(t)dt}(A(s))$, Magnus' series gains some transparency
\begin{equation}
\label{plm-intro}
	\dot{\Omega}(hA)=hA - h^2\frac{1}{2} A \rhd A 
		+ h^3\frac{1}{4}(A \rhd A) \rhd A
		+ h^3\frac{1}{12} A \rhd (A \rhd A) + \cdots
		=hA+\sum_{n>0} \frac{B_n}{n!} L_{\dot{\Omega}(A)\rhd}^{(n)}(hA).
\end{equation}
We denote by $L_{A\rhd}$ the left multiplication operator defined by $L_{A\rhd}(B):= A  \rhd B$. A similar approach applies to Fer's expansion \cite{EM1}. Note that the right-hand side of \eqref{plm-intro} already appeared in \cite{AG1} (where left pre-Lie algebras are called \textsl{chronological algebras}), but the dendriform structure is required to establish identity \eqref{plm-intro} itself \cite{EM1}. The pre-Lie picture is our starting point for the use of rooted trees in the exploration of Magnus' expansion. We would also like to mention the following references \cite{Chap1,Chap2,Chap3,ChaPat,CHNT}, which explore in depth pre-Lie aspects of Magnus' expansion. In \cite{gelfand} the Magnus expansion appears in the context of non-commutative symmetric functions.

One may wonder whether there is another expression for $\dot \Omega(A)$ in terms of the dendriform operations (\ref{dend1}) rather than using the pre-Lie product. In \cite{EM5} we gave a positive answer, which is based on a classical commutator free formula due to Mielnik--Pleba\'nski and Strichartz.

\begin{prop}[Mielnik--Pleba\'nski--Strichartz formula \cite{MielPleb,Strichartz}]\label{cor:mps2}
The function $\dot{\Omega}(A)(t)$ is given by the series of iterated integrals
\begin{equation}
\label{eq:mps2}
	\dot{\Omega}(A)(t)= \sum_{n > 0}  \sum_{\sigma\in S_n} \frac{(-1)^{d(\sigma)}}{\binom{n-1}{d(\sigma)}n} 
	\idotsint\limits_{0<u_n<\cdots <u_1<t}A(u_{\sigma_1})\cdots A(u_{\sigma_n})\,du_1\cdots du_n.
\end{equation}
\end{prop}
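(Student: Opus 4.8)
The plan is to bypass the recursion \eqref{eq:MagCl} entirely and instead compute $\Omega(A)(t)=\log\bigl(\mathsf{T}\!\exp(\int_0^t A(s)\,ds)\bigr)$ in closed form by a brute expansion of the logarithm, recovering $\dot\Omega$ at the end by a single differentiation in $t$. Throughout, $d(\sigma)$ denotes the number of descents of $\sigma\in S_n$, i.e.\ the number of indices $i$ with $\sigma_i>\sigma_{i+1}$, and $\Delta_n(t)=\{0<u_n<\cdots<u_1<t\}$. First I would write $\mathsf{T}\!\exp(\int_0^t A)=\un+X(t)$ with $X(t)=\sum_{n\geq1}\int_{\Delta_n(t)}A(s_1)\cdots A(s_n)\,ds$, the sum of time-ordered simplex integrals read off from \eqref{eq:TorderExplicit}, and use $\log(\un+X)=\sum_{k\geq1}\tfrac{(-1)^{k-1}}{k}X^{k}$.

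The core of the proof is a decomposition of each power $X^{k}$ into permuted-simplex integrals. A single monomial in $X^{k}$ is a product $F_{p_1}(t)\cdots F_{p_k}(t)$ of time-ordered blocks, integrating $A(u^{(1)}_1)\cdots A(u^{(k)}_{p_k})$ over $\prod_j\{u^{(j)}_1>\cdots>u^{(j)}_{p_j}\}$. I would refine this domain according to the total order of all $n=p_1+\cdots+p_k$ time variables; each compatible total order relabels them as $u_1>\cdots>u_n$ and rewrites the integrand as $A(u_{\sigma_1})\cdots A(u_{\sigma_n})$ over $\Delta_n(t)$, where $\sigma_i$ is the global rank (largest time $=1$) of the operator sitting in position $i$. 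Since the ranks increase along each block, the monomial $F_{p_1}\cdots F_{p_k}$ contributes $A(u_{\sigma_1})\cdots A(u_{\sigma_n})$ exactly when $\sigma$ is increasing on each of the $k$ consecutive blocks of the composition $(p_1,\dots,p_k)$ — equivalently, when every descent of $\sigma$ falls on a block boundary.

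It remains to count and sum. To cut $\{1,\dots,n\}$ into $k$ increasing blocks one must place the $k-1$ separators among the $n-1$ gaps so as to cover all $d(\sigma)$ descents; the forced separators sit at the descents and the remaining $k-1-d(\sigma)$ are chosen freely among the $n-1-d(\sigma)$ ascents, so $\sigma$ occurs in $X^{k}$ with multiplicity $\binom{n-1-d(\sigma)}{k-1-d(\sigma)}$. Hence the coefficient of $\int_{\Delta_n(t)}A(u_{\sigma_1})\cdots A(u_{\sigma_n})$ in $\Omega$ is
\[
c_\sigma=\sum_{k\geq1}\frac{(-1)^{k-1}}{k}\binom{n-1-d(\sigma)}{k-1-d(\sigma)}
=(-1)^{d(\sigma)}\sum_{j=0}^{n-1-d(\sigma)}\frac{(-1)^{j}}{j+d(\sigma)+1}\binom{n-1-d(\sigma)}{j}.
\]
The last alternating sum is evaluated by the Beta integral $\int_0^1 x^{a-1}(1-x)^{m}\,dx=\tfrac{(a-1)!\,m!}{(a+m)!}$ with $a=d(\sigma)+1$ and $m=n-1-d(\sigma)$, which collapses it to $\tfrac{d(\sigma)!\,(n-1-d(\sigma))!}{n!}$ and yields $c_\sigma=\tfrac{(-1)^{d(\sigma)}}{n\binom{n-1}{d(\sigma)}}$. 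Differentiating the resulting closed form for $\Omega$ term by term in $t$ evaluates the outermost variable $u_1$ at $t$ and produces \eqref{eq:mps2}.

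The step I expect to be delicate is the middle one: one must check that refining the product of simplices by the global order is precisely a sum over $S_n$ with no repetition and no sign subtleties, that within-block monotonicity is equivalent to ``descents are block boundaries'' and imposes nothing else, and that the resulting multiplicity is exactly the stated binomial coefficient. Once this dictionary between compositions with forced breaks at the descents and permutations graded by descent number is in place, the alternating-sum evaluation and the final differentiation are routine.
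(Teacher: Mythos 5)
Your proof is correct, but it takes a genuinely different route from the paper's. The paper in fact offers no proof of Proposition \ref{cor:mps2}: the formula is quoted from \cite{MielPleb,Strichartz}, and the derivation the authors rely on (and reuse throughout Section \ref{sect:PleMieSti}) is Corollary 6 of \cite{EM5}, where the coefficient $\frac{(-1)^{d}}{n\binom{n-1}{d}}$ is obtained once and for all in the \emph{free} dendriform algebra on one generator — by computing $\log^*$ of the solution of the linear dendriform equation, with the combinatorics organized by planar binary trees and the Hopf algebra $\hbox{\bf FQSym}$ — and then transported to the analytic setting along the unique dendriform morphism sending the generator to $A$. You instead work directly in the algebra of iterated integrals: expand $\log(\un+X)$ against the Dyson series \eqref{eq:TorderExplicit}, decompose each product of simplices into permuted simplices (valid up to the measure-zero diagonals), observe that $\sigma$ occurs in the monomial of shape $(p_1,\ldots,p_k)$ exactly when $D(\sigma)$ lies in the block boundaries, count the compositions $\binom{n-1-d(\sigma)}{k-1-d(\sigma)}$, and evaluate the alternating sum by the Beta integral; note also that this sum is finite since the binomial vanishes for $k>n$, so the rearrangement is legitimate degree by degree. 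All of these steps check out, including the middle one you flagged — this is essentially the classical Mielnik--Pleba\'nski/Strichartz computation, elementary and self-contained. What the paper's abstract route buys is portability: isolating the coefficient computation in the free object is precisely what allows the present paper to pass to the weight-one, tridendriform setting, where your simplex decomposition would fail as stated because diagonals are no longer negligible (iterated sums charge them), forcing surjections and $\hbox{\bf WQSym}$ in place of permutations and $\hbox{\bf FQSym}$.

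One bookkeeping point in your last step deserves care. What your computation produces \emph{before} differentiating is literally the right-hand side of \eqref{eq:mps2} — $n$-fold simplex integrals with coefficients $\frac{(-1)^{d(\sigma)}}{n\binom{n-1}{d(\sigma)}}$ — and that is the closed formula for $\Omega(A)(t)$ itself. Differentiating in $t$ removes one integration (it pins $u_1=t$), yielding $(n-1)$-fold integrals at order $n$; compare $n=2$, where \eqref{plm-intro} gives $\dot\Omega$ at second order as $\frac12\int_0^t[A(t),A(u)]\,du$, a single integral, while the printed right-hand side at $n=2$ is the double integral $\frac12\int_{0<u_2<u_1<t}[A(u_1),A(u_2)]\,du_1du_2=\Omega_2(t)$. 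So your final sentence should either stop before differentiating (matching the printed right-hand side, with $\Omega$ on the left) or replace the right-hand side by its $t$-derivative, with $u_1$ frozen at $t$ and $du_2\cdots du_n$ only. The statement as printed, with $\dot\Omega$ facing an $n$-fold integral, is to be read with exactly that convention, which is what the dendriform realization of a degree-$n$ word (one evaluation at $t$, $n-1$ integrations) actually produces; once this is fixed, your argument is a complete and correct proof.
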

Here $S_n$ is the group of permutations of $n$ elements, and $d(\sigma)$ is the cardinality of the descent set $D(\sigma)\subset\{1,\ldots,n-1\}$ of the permutation $\sigma \in S_n$, i.e., the subset  of indices $i$ such that $\sigma(i)>\sigma(i+1)$. Unveiling the very dendriform nature of formula (\ref{eq:mps2}) requires the use of the free dendriform algebra with one generator (concretely described in terms of planar binary trees, or alternatively in terms of planar rooted trees via Knuth's rotation correspondence), as well as the Malvenuto--Reutenauer--Foissy bidendriform Hopf algebra $\hbox{\bf FQSym}$ of free quasi-symmetric functions.

\smallskip

As $\Omega(A)$ is a Lie element, we can use the Dynkin--Specht--Wever theorem (Theorem~8 on page 169 in \cite{jacobson}), so that we recover the formula in its original Lie algebraic setting
\begin{equation}
\label{eq:mpslie}
	\dot{\Omega}(A)(t)= \sum_{n > 0}  \sum_{\sigma\in S_n} \frac{(-1)^{d(\sigma)}}{\binom{n-1}{d(\sigma)}n^2 } 
		\idotsint\limits_{0<u_n<\cdots <u_1<t}
	[A(u_{\sigma_1}),[A(u_{\sigma_2}),\ldots [A(u_{\sigma_{n-1}},\,A(u_{\sigma_n})]\cdots]]\,du_1\cdots du_n.
\end{equation}

\smallskip

In the case of discrete analogues of differentiation and integration further refinement is needed. Recall that corresponding to the modified Leibniz rule for finite-difference operators, summation operators satisfy a modified integration by parts identity. The latter accounts for non-trivial diagonal terms. Therefore, replacing the Riemann integral by a Riemann sum operator, which we denote by $\Sigma$, yields three binary products
$$
	A \succ B := \Sigma(A) B \qquad\ A \prec B := A\Sigma(B)  \qquad\ A\cdot B:= AB.
$$
They are known to form a tridendriform algebra \cite{LR3}, which can be interpreted as a non-commutative quasi-shuffle like structure \cite{PaNoTh}. 

\medskip 

This paper is a continuation of our work \cite{EM5}. First we explore the Magnus expansion from the tridendriform algebra point of view, using planar reduced rooted trees. Then we aim at  a ``discrete analogue" of the Mielnik--Pleba\'nski--Strichartz formula (\ref{eq:mps2}), i.e., iterated integrals will be replaced by iterated sums. Contrarily to the continuous case, partial diagonals have to be taken into account. The relevant algebraic structure will be given by the one of tridendriform algebra, which is a natural refinement of the notion of dendriform algebra \cite{LR} proposed by J-L.~Loday and M.~Ronco in \cite{LR3}. The Malvenuto--Reutenauer--Foissy bidendriform Hopf algebra $\hbox{\bf FQSym}$ must then be replaced by the more refined \textsl{tridendriform Hopf algebra} $\hbox{\bf WQSym}$ of \textsl{word quasi-symmetric functions}, where the groups $S_n$ of permutations of $\{1,\ldots ,n\}$ are replaced by the sets $ST_n^r$ of surjective maps from $\{1,\ldots ,n\}$ onto $\{1,\ldots ,r\}$.

\smallskip

The free tridendriform algebra with one generator is concretely described in terms of planar reduced trees \cite{LR}, or alternatively in terms of planar rooted hypertrees via a suitable extension of Knuth's rotation correspondence \cite{EM4}. The tridendriform Hopf algebra $\hbox{\bf WQSym}$ can be traced back to F.~Hivert's PhD thesis \cite{Hivert}, in which he constructs the even larger Hopf algebra $\hbox{\bf MQSym}$ of \textsl{matrix quasi-symmetric functions}, which naturally contains $\hbox{\bf WQSym}$. A clear account of the associated tridendriform structure can be found in \cite{NT}. This object has also been thoroughly studied under the notation $\hbox{\bf ST}$ by E.~Burgunder and M.~Ronco in \cite{BR}.

\smallskip

The discrete Mielnik--Pleba\'nski--Strichartz formula splits into two versions according to whether one excludes the upper bound from the summation operator or not, see equations \eqref{dmps1} and \eqref{dmps2} respectively. Both look similar to \eqref{eq:mps2} once iterated integrals have been replaced with iterated sums, except that the notion of descent, extended from permutations to surjections, splits into a strict and a weak version, each of them giving its variant of the formula. The strict (resp.~weak) descent set of a surjection $\sigma:\{1,\ldots ,n\}\surj\{1,\ldots ,r\}$ is the set of indices $i\in\{1,\ldots ,n-1\}$ such that $\sigma(i)>\sigma(i+1)$ (resp. $\sigma(i)\ge \sigma(i+1)$). 

\smallskip

We note that as well as any dendriform algebra is naturally pre-Lie, any tridendriform algebra is naturally endowed with a structure of \textsl{post-Lie algebra} \cite{BGN}. The latter is a vector space together with a binary product $\rhd$ and a Lie bracket $[-,-]$ subject to compatibility axioms \cite{LunMun1, LunMun2,Vallette}. Recently, due to the work of Munthe-Kaas et al.~it became clear that post-Lie algebras play a central role in the theory of Lie group integrators on manifolds. It would be interesting to understand the post-Lie algebra structure underlying  the Magnus expansion by refining \eqref{plm-intro} for logarithms of solutions of discrete IVPs. We plan to address this problem in a forthcoming paper.\\

The paper is organized as follows: in Section \ref{sect:AlgPri} we review the notion of trees, and introduce the essential algebraic structures. In Section \ref{sect:MagExp} we give a detailed description of two ``Magnus elements", namely the logarithms of the solutions of two first-order linear tridendriform equations, corresponding to the two dendriform structures one can associate to a tridendriform algebra. After a reminder of the pre-Lie Magnus expansion, we give a tridendriform Magnus expansion of the two Magnus elements above in terms of planar reduced trees, when the tridendriform algebra is free. Finally in Section \ref{sect:PleMieSti}, relating the tridendriform algebra of sequences with $\hbox{\bf WQSym}$ and with the free tridenriform algebra, we give the discrete analogue of the Mielnik--Pleba\'nski--Strichartz formula.

\medskip

\noindent
{\bf{Acknowledgements:}} We would like to thank  A.~Lundervold, H.~Munthe-Kaas, E.~Burgunder, M.~Livernet, F.~Patras, M.~Ronco and J.-Y.~Thibon for discussions and remarks. We are thankful to the referees for their comments and suggestions. The first author is supported by a Ram\'on y Cajal research grant from the Spanish government, as well as the project MTM2011-23050 of the Ministerio de Econom\'{i}a y Competitividad. Both authors were supported by the CNRS (GDR Renormalisation), and by Agence Nationale de la Recherche, projet CARMA ANR-12-BS01-0017.\\


\section{Algebraic and combinatorial preliminaries}
\label{sect:AlgPri}

Throughout the paper, $k$ will stand for a field of characteristic zero. In this section we recall the notion of trees, as well as the relevant algebraic structures.


\subsection{Planar reduced trees}
\label{sect:trees}

Recall that a tree $t$ is a connected and simply connected graph made out of vertices and edges, the sets of which we denote by $V(t)$ and $E(t)$, respectively. A \textsl{planar reduced tree} is a finite oriented tree given an embedding in the plane, such that all vertices have two or more incoming edges, and exactly one outgoing edge. An edge can be internal (connecting two vertices) or external (with one loose end). The external incoming edges are the leaves. The root edge is the unique edge not ending in a vertex. For any planar reduced tree $t$, a partial order on the set of its vertices $V(t)$ is defined as follows: $u,v \in V(t)$, $u<v$ if and only if there is a path from the root of $t$ through $u$ up to $v$. A planar reduced tree is \textsl{binary} if all vertices have exactly two incoming edges.
$$
	\raise -4pt\hbox{$\Big\vert$}
	\qquad\
	\scalebox{1.5}{\tree}
	\qquad\
	\treeA
	\quad
	\treec
	\qquad
	\treeB
	\qquad\
	\treeE
	\quad
	\treebcbis
	\qquad
	\treeD
	\quad
	\treecbbis
	\qquad
	\treeC
	\quad
	\treebc
	\quad
	\treeG	
	\quad
	\treecbsym
	\qquad
	\treeF 
	\quad
	\treecb
	\qquad
	\treed\qquad\ldots
$$
We include the unique planar reduced tree without internal vertices, i.e., the single edge $\vert$, despite the fact that it is not binary in the strict sense. We denote by $T^{red}_{pl}$ (resp.~$\Cal T^{red}_{pl}$) the set (resp.~the linear span) of planar reduced trees. A simple grading for such trees is given in terms of the number of internal vertices. Alternatively, one can use the number of leaves. Above we listed all planar reduced trees up to four leaves. Observe that for any collection $(t_1,\ldots ,t_n)$ of planar reduced trees we can build up a new planar reduced tree via the grafting operation, $t:=\bigvee(t_1,\ldots,t_n)$, by considering the unique planar binary tree with one single vertex and $n$ leaves, and plugging $t_k$ on leaf number $k$,\, $k\in\{1,\ldots ,n\}$, from left to right.\\

Any planar reduced tree $t\not =\vert$ obviously can be expressed as $\bigvee(t_1,\ldots ,t_n)$ in a unique way. The grafting operation $\bigvee$ makes $T_{pl}^{red}$ the free generic magma with one generator and one operation in any arity $a\ge 2$. Notice that this product is of degree zero with respect to the leaf number grading. However we will adopt the grading $t\to|t|$ given by the number of leaves of $t$ minus one. We call the binary trees $\tau^{(n)}_r$, $\tau^{(n)}_l$ recursively defined by $\tau^{(0)}_{r}:= \vert=:\tau^{(0)}_{l}$ and $\tau^{(n+1)}_r:= \bigvee( |,\tau^{(n)}_r)$, $\tau^{(n+1)}_l:= \bigvee(\tau^{(n)}_l, |)$ right and left combs, respectively.  The following list includes the right and left combs up to order three
$$
\scalebox{1.5}{\tree}
	\qquad\
	\treeA
	\quad
	\treeC\
	\cdots
	\qquad\
	\treeB
	\quad\
	\treeE\
	\cdots
$$ 
There is a partial order on $T^{red}_{pl}$ defined as follows: $t_1 \le t_2$ if $t_1$ can be obtained from $t_2$ by glueing some inner vertices together. Here glueing refers to shrinking an edge between two adjacent inner vertices until it becomes a new inner vertex. In particular, two comparable trees must have the same degree. The minimal elements are the trees with only one inner vertex, and the maximal elements are the planar binary trees. 
\begin{rmk}
Following a suggestion by Jean-Louis Loday\footnote{Private communication}, we have proved that a natural extension of D. Knuth's rotation correspondence settles a natural bijection from planar reduced trees onto {\rm planar rooted hypertrees\/}. As we won't use this fact here, we refer the reader to \cite{EM4} for details.
\end{rmk}


\subsection{Pre- and Post-Lie algebras}
\label{ssect:pL}

Recall that a left {\sl{pre-Lie algebra}} $(\Cal P, \rhd)$ is a $k$-vector space $\Cal P$ equipped with an operation $\rhd: \Cal P \otimes \Cal P \rightarrow \Cal P$ subject to the following relation
$$
	(a \rhd b) \rhd c - a \rhd ( b \rhd c) = (b \rhd a) \rhd c - b \rhd (a \rhd c).
$$
The Lie bracket following from antisymmetrization in $\Cal P$, $[a,b]:=a \rhd b - b \rhd a$, satisfies the Jacobi identity. See e.g.~\cite{Manchon} for a survey on pre-Lie algebras. Recall from Chapoton and Livernet \cite{ChaLiv} that the basis of the free pre-Lie algebra in one generator can be expressed in terms of undecorated, non-planar rooted trees. See also \cite{AG1, Segal} for other descriptions of free pre-Lie algebra. 

A natural example of pre-Lie algebra is given in terms of a differentiable manifold $M$ endowed with a flat torsion-free connection. The corresponding covariant derivation operator $\nabla$ on the space $\big(\chi(M),\rhd\big)$ of vector fields on $M$ gives it a left pre-lie algebra structure defined by $a\rhd b=\nabla_ab$, by virtue of the two equalities
\begin{equation*}
	\nabla_ab-\nabla_ba=[a,b], \hskip 15mm \nabla_{[a,b]}=[\nabla_a,\nabla_b]
\end{equation*}
which express the vanishing of torsion and curvature respectively. For $M=\R^n$ with the standard flat connection we have for
$a=\sum_{i=1}^na_i\partial_i$ and $b=\sum_{i=1}^nb_i\partial_i$
\begin{equation*}
	a\rhd b=\sum_{i=1}^n\left(\sum_{j=1}^na_j(\partial_jb_i)\right)\partial_i.
\end{equation*}

A left {\sl{post-Lie algebra}} $(\Cal Q, \rhd,[-,-]))$ is a Lie algebra $\Cal Q$ with Lie bracket $[-,-]$, together with another operation $\diamond : \Cal Q \otimes \Cal Q \rightarrow \Cal Q$ subject to the following two compatibility relations
\begin{eqnarray}
	&&a\diamond [b,c]=[a\diamond b,\,c] + [a,\,b\diamond c], \label{postLie}\\
	&&[a,b] \diamond c=a\diamond (b\diamond c)-(a\diamond b)\diamond c
							-b\diamond (a\diamond c)+(b\diamond a)\diamond c.
\end{eqnarray}
Note that a pre-Lie algebra is a post-Lie algebra with vanishing Lie bracket. The natural geometric example of a post-Lie algebra is given in terms of a connection which is flat and has constant torsion. See \cite{LunMun2,LunMun3} for details.


\subsection{Rota--Baxter algebras}
\label{sssect:dpse}
 
Recall that a {\sl{Rota--Baxter algebra}} is a $k$-algebra $\Cal A$ endowed with a $k$-linear map $T: \Cal A \to \Cal A$ that satisfies the relation
\begin{equation}
\label{RB}
    T(a)T(b) = T\big(T(a)b + aT(b) + \theta ab\big),
\end{equation}
where $\theta \in k$ is a fixed parameter \cite{Baxter}. The map $T$ is called a {\sl Rota--Baxter operator of weight $\theta$\/}. The map $\widetilde{T}:=-\theta \textrm{id}-T$ also is a weight $\theta$ Rota--Baxter map. Both images $T(\Cal A)$ and $\wt{T}(\Cal A)$ are subalgebras in $\Cal A$. One may think of (\ref{RB}) as a generalized integration by parts identity. Indeed, a simple example of Rota--Baxter algebra is given by the classical integration by parts rule showing that the ordinary Riemann integral is a weight zero Rota--Baxter map. Other examples can be found for instance in \cite{EM1,EM2,EM3}.


\subsection{Tridendriform algebras}
\label{ssect:DendAlg}

We introduce the notion of {\sl{tridendriform algebra}}~\cite{LR3} over $k$, which is a $k$-vector space $\Cal D$ endowed with three bilinear operations $\prec$, $\succ$ and $\cdot$, subject to the seven following axioms
\begin{eqnarray}
	(a\prec b)\prec c  &=& a\prec(b \prec c + b \succ c+b\cdot c),       	\label{A1}\\
  	(a\succ b)\prec c  &=& a\succ(b\prec c),   				\label{A2}\\
   	a\succ(b\succ c)  &=& (a \prec b + a \succ b+b\cdot c)\succ c,    	\label{A3}\\
	(a\cdot b)\cdot c&=&a\cdot(b\cdot c),	\label{A4}\\
	(a\succ b)\cdot c&=&a\succ(b\cdot c),	\label{A5}\\
	(a\prec b)\cdot c&=&a\cdot(b\succ c),	\label{A6}\\
	(a\cdot b)\prec c&=&a\cdot(b\prec c).	\label{A7}
\end{eqnarray}
Axioms (\ref{A1})-(\ref{A7}) imply that for $a,b \in \Cal D$ the composition
\begin{equation}
\label{dendassoc}
	a * b := a \prec b + a \succ b+a\cdot b
\end{equation}	
defines an associative product. At first this may look puzzling, but further below we will see that finite differences provide a natural and elucidating example, showing that these axioms encode the modified integration by parts formula for summation operators. 

A {\sl{dendriform algebra}} is defined by setting the product $\cdot$ to zero in the above axioms. Hence, the rules of a dendriform algebra  are given in terms of axioms (\ref{A1})-(\ref{A3}) alone, without the $\cdot$ term. Note, for example, that this reduced set of rules encodes integration by parts for the Riemann integral. 

However, any tridendriform algebra $(\Cal D,\prec,\succ,\cdot)$ gives rise to two ordinary dendriform algebras $\Cal{D}_L:=(\Cal D,\preceq,\succ)$ and $\Cal{D}_R:=(\Cal D,\prec,\succeq)$ with $\preceq:=\prec+\,\cdot$ and $\succeq:=\succ+\,\cdot$. Recall that dendriform algebras, hence tridendriform algebras as well, are at the same time pre-Lie algebras. Indeed, the two following products inherited from the dendriform structure $\Cal{D}_R$
\begin{equation}
\label{def:prelie}
    a \rhd b:= a\succeq b-b\prec a,
    \hskip 12mm
    a \lhd b:= a\prec b-b\succeq a
\end{equation}
are left pre-Lie and right pre-Lie, respectively. That is, we have
\begin{eqnarray*}
    (a\rhd b)\rhd c-a\rhd(b\rhd c)&=& (b\rhd a)\rhd c-b\rhd(a\rhd c),\\
    (a\lhd b)\lhd c-a\lhd(b\lhd c)  &=& (a\lhd c)\lhd b-a\lhd(c\lhd b).
\end{eqnarray*}
The Lie brackets following from the associative operation (\ref{dendassoc}) and the pre-Lie operations (\ref{def:prelie}) all define the same Lie algebra. The same holds of course \textsl{mutatis mutandis} for the other dendriform algebra $\Cal{D}_L$, giving rise to two other pre-Lie products which will be denoted by $\urhd$ and $\ulhd$. Note that the associative product (\ref{dendassoc}) is the same for both dendriform structures.

\smallskip 

For any tridendriform algebra $\Cal D$ we denote by $\overline{\Cal D} = \Cal{D} \oplus k.\un$ the corresponding dendriform algebra augmented by a unit $\un$, with the following rules
\begin{equation*}
    a \prec \un := a =: \un \succ a
    \hskip 12mm
    \un \prec a = a \succ \un=\un\cdot a=a\cdot \un:=0,
\end{equation*}
implying $a*\un=\un*a=a$. Note that the equality $\un*\un=\un$ makes sense, but that $\un \prec \un$, $\un \cdot \un$ and $\un \succ \un$ are not defined. \\

Now suppose that the tridendriform algebra $\Cal D$ is complete with respect to the topology given by a decreasing filtration $\Cal D=\Cal{D}^1\supset \Cal{D}^2\supset \Cal{D}^3\supset\cdots$, compatible with the dendriform structure, in the sense that $\Cal{D}^p\prec \Cal{D}^q\subset \Cal{D}^{p+q}$, $\Cal{D}^p\succ \Cal{D}^q\subset \Cal{D}^{p+q}$ and $\Cal{D}^p\cdot \Cal{D}^q\subset \Cal{D}^{p+q}$ for any $p,q\ge 1$. In the unital algebra we can then define the exponential and logarithm map in terms of the associative product~(\ref{dendassoc})
$$
	\exp^*(x):=\sum_{n \geq 0} x^{*n}/n!  
	\quad\ {\rm{resp.}} \quad\ 
	\log^*(\un+x):=-\sum_{n>0}(-1)^nx^{*n}/n. 
$$
Let $L_{a \succ} \left( b \right) := a \succ b$ and $R_{\succ b} \left( a \right) := a \succ b$. Note that $L_{a \succ} L_{b \succ} = L_{(a \ast b) \succ}$. We recursively define the set of tridendriform words in $\overline{\Cal{D}}$ for fixed elements $x_1,\ldots, x_n \in \Cal{D}$, $n \in \mathbb{N}$ by
 \allowdisplaybreaks{
\begin{eqnarray*}
    w^{(0)}_{\prec}(x_1,\ldots, x_n) &:=& \un =:w^{(0)}_{\succ}(x_1,\ldots, x_n) \\
    w^{(n)}_{\prec}(x_1,\ldots, x_n) &:=& x_1 \prec \bigl(w^{(n-1)}_\prec(x_2,\ldots, x_n)\bigr)\\
    w^{(n)}_{\succ}(x_1,\ldots, x_n) &:=& \bigl(w^{(n-1)}_\succ(x_1,\ldots, x_{n-1})\bigr)\succ x_n.
\end{eqnarray*}}
In case that $x_1=\cdots = x_n=x$ we simply write $w^{(n)}_{\prec}(x,\ldots, x)=x^{(n)}_{\prec}$ and 
 $w^{(n)}_{\succ}(x,\ldots, x)= x^{(n)}_{\succ}$.\\

\noindent Our main example of a tridendriform algebra comes from the following simple observation. One verifies easily that any associative Rota--Baxter algebra $\Cal A$ of weight $\theta$ gives rise to a tridendriform algebra as follows
\allowdisplaybreaks{
\begin{eqnarray}
\label{RBtridend}
    a \prec b := aT(b),\quad\ a\succ b := T(a)b,\quad\ a\cdot b:=\theta ab.
\end{eqnarray}}
The corresponding associative and left pre-Lie products are explicitly given for $a,b \in \Cal A$ by
\begin{eqnarray}
\label{RBdoublePreLie}
	a*b&=&T(a)b+aT(b)+\theta ab,\\ 
	 a \rhd b& =& [T(a),b] + \theta ab,\\
	 a\urhd b&=& [T(a),b]-\theta ba.
\end{eqnarray}
Note that in a commutative Rota--Baxter algebra with weight $\theta\not =0$, the pre-Lie products are still nontrivial although the Lie brackets vanish. This leads to the classical Spitzer identity \cite{EM1,EM2}. By omitting the $\theta$-terms the product
\begin{equation}
\label{RBpostLie} 
	a \diamond b :=   a\succ b - b \prec a = [T(a),b] 
\end{equation} 
yields a post-Lie algebra structure (\ref{postLie}) on $\Cal A$ with respect to the Lie bracket defined in terms of the third tridendriform product \cite{BGN}. 

\smallskip 

The $\theta$-term on the right hand side of (\ref{RB}), respectively the product $\cdot$ in the definition of the tridendriform algebra, is necessary, for instance, when we replace the Riemann integral by a Riemann-type summation operator. This becomes evident once we recall the modified Leibniz rule for the finite difference operator $\delta(f)(x):=f(x+1)-f(x)$ on a suitable class of functions 
$$
	\delta(fg) = \delta (f) g + f\delta(g) + \delta(f)\delta(g). 
$$
The corresponding summation operator
\begin{equation}
\label{sum}
    \Sigma(f)(x):= \sum_{n=0}^{[x]-1} f(n)
\end{equation}
verifies the weight $\theta=1$ Rota--Baxter relation
\begin{equation*}
     \Sigma(f) \Sigma(g) = \Sigma\big(\Sigma(f)  g +f \Sigma(g)+fg\big).
\end{equation*}
See further below in subsection \ref{ssect:tridendSeq} for more details. More generally for finite Riemann sums
\begin{eqnarray}
\label{Riemsum}
    T_\theta(f)(x) := \sum_{n = 0}^{[x/\theta]-1} \theta f(n\theta)
\end{eqnarray}
where $\theta$ is a positive real number and $[-Ð]$ is the floor function, we find that $T_\theta$ satisfies the weight $\theta$ Rota--Baxter relation
$$ 
	T_\theta(f)T_\theta(g)=
	T_\theta\big(T_\theta(f)g + fT_\theta(g)+\theta fg\big).
$$


\subsubsection{Tridendriform algebra structure on planar reduced trees}
\label{sssect:DendAlgTree}

In \cite{LR3} it was shown that the linear span ${\Cal T'}^{red}_{pl}$ of planar reduced trees different from $\vert$ generates the free tridendriform algebra in one generator. Starting from taking $\vert$ as a unit for the associative product $*$, the three products for two trees $s=\bigvee(s_1,\ldots,s_n)$ and  $t=\bigvee(t_1,\ldots, t_p)$ are given recursively by
\begin{eqnarray*}
	s\prec t&=&\bigvee(s_1,\ldots,s_{n-1},s_n*t),\\
	s\succ t&=&\bigvee(s*t_1,t_2,\ldots,t_p),\\
	s\cdot t&=&\bigvee(s_1,\ldots,s_{n-1},s_n*t_1,t_2,\ldots,t_p).
\end{eqnarray*}
The tree $\vert$ can be taken as the unit for the corresponding augmented dendriform algebra. For any  collection of trees $(t_1,\ldots,t_n)$ we easily derive
\begin{equation}
\label{eq:tridend1}
	\bigvee(t_1,t_2)=t_1\succ\raise 3pt\hbox{$\scalebox{0.8}\tree$}\prec t_2,
\end{equation}
as well as
\begin{eqnarray}
\label{eq:tridend}
	\bigvee(t_1,\ldots ,t_n)&=&t_1\succ\bigvee(\vert,t_2,\ldots,t_n)\nonumber\\
					  &=&t_1\succ\raise 3pt\hbox{$\scalebox{0.8}\tree$}\cdot\bigvee(t_2,\ldots,t_n)\nonumber\\
					  &=&(t_1\succ\raise 3pt\hbox{$\scalebox{0.8}\tree$})\cdot\ \cdots\ \cdot(t_{n-2}\succ
					  \raise 3pt\hbox{$\scalebox{0.8}\tree$}) \cdot (t_{n-1}\succ\raise 3pt\hbox{$\scalebox{0.8}\tree$}\prec t_n)
\end{eqnarray}
for $n\ge 2$. We have omitted parentheses in the second line in the computation above, by virtue of Axiom \eqref{A5}. The freeness property of $({\Cal T'}^{red}_{pl},\succ,\prec,\cdot)$ implies that for any tridendriform algebra $\Cal D$ and any $a\in \Cal D$ there is a unique morphism $F_a : { \Cal T'}^{red}_{pl} \to \Cal D$. Using (\ref{eq:tridend1}) and (\ref{eq:tridend}), this morphism can be described recursively. Indeed, starting from $F_a(\raise 3pt\hbox{$\scalebox{0.8}\tree$}):=a$ we have 
\begin{eqnarray*}
	F_a(t)	&=&	F_a\left(\bigvee(t_1,\ldots,t_n)\right) \\
			&=&\big(F_a(t_1)\succ a\big)\cdot\ \cdots\ \cdot \big(F_a(t_{n-2})\succ a\big)\cdot\big(F_a(t_{n-1})\succ a\prec F_a(t_n)\big),
\end{eqnarray*}
for any $n\ge 2$, as is easily seen from \eqref{eq:tridend}.


\section{Linear tridendriform equations and the pre-Lie Magnus expansion}
\label{sect:MagExp}

In this section we abstract (\ref{eq:fixpoint}) into linear fixed point equations in the complete filtered tridendriform algebra ${h\Cal A[[h]]}$, augmented by a unit $\un$, where $(\Cal A,\prec,\succ,\cdot)$ is any tridendriform algebra. For $a \in \Cal A$, let $X=X(ha)$ and $\overline X=\overline X(ha)$ be solutions of
\begin{eqnarray}
			X &=& \un + h a \prec X			\label{eq:dendri-linear1} \\
	  \overline X &=& \un + h a \preceq \overline X,	\label{eq:dendri-linear2}
\end{eqnarray}
respectively. Equation \eqref{eq:dendri-linear1}, resp.  \eqref{eq:dendri-linear2}, is understood to take place in the unital dendriform algebra $\overline{h\Cal A_R[[h]]}$, resp. $\overline{h\Cal A_L[[h]]}$. Their formal solutions are
\begin{eqnarray}
	X&=& \sum_{n\ge 0} (h a)^{(n)}_{\prec}= \un +  h a 
		+ h^2 a \prec a 
		+ h^3 a \prec \left( a \prec a \right) 
		+ h^4 a \prec \left( a \prec \left( a \prec a \right)\right) +\cdots,\\ 
	\overline X&=& \sum_{n\ge 0} (h a)^{(n)}_{\preceq}= \un +  h a 
		+ h^2 a \preceq a 
		+ h^3 a \preceq \left( a \preceq a \right) 
		+ h^4 a \preceq \left( a \preceq \left( a \preceq a \right)\right) +\cdots .
\end{eqnarray}


\subsection{The pre-Lie Magnus expansion}
\label{ssect:pLMag}

In \cite{EM1,EM2} we have given a general formula, the \textsl{pre-Lie Magnus expansion}, for the logarithm of such linear dendriform equations in terms of the left pre-Lie product. Applying this to the two dendriform structures above, we obtain, with the notations of Paragraph \ref{ssect:DendAlg}

\begin{thm} \label{thm:pLMagnus}
{\rm{(\cite{EM1,EM2})}}
The elements $\Omega' = \log^*(X(h a))$ and $\overline\Omega' = \log^*(\overline X(h a))$ in $\Cal A[[h]]$ satisfy respectively the two recursive formulas
\begin{eqnarray}
	\Omega' &=& \frac{L_{\Omega' \rhd}}{e^{L_{\Omega' \rhd} } - 1} (h a) 
	   = \sum_{m \geqslant 0} \frac{B_m}{m!}  L_{\Omega' \rhd}^{(m)} (h a), \label{pLMag1}\\
	   \overline\Omega' &=& \frac{L_{\overline\Omega' \underline\rhd}}{e^{L_{\overline\Omega' \underline\rhd} } - 1} (h a) 
	   = \sum_{m \geqslant 0} \frac{B_m}{m!}  L_{\overline\Omega' \underline\rhd}^{(m)} (h a)\label{pLMag2}
\end{eqnarray}
where $B_m$ is the $m$-th Bernoulli number. The first few terms are
\begin{eqnarray*}
			\Omega'(h a)&=&h  a - h^2 \frac 12 a\rhd a 
					+ h^3 \left(\frac 14 (a\rhd a)\rhd a + \frac 1{12} a\rhd(a\rhd a)\right)+\cdots,\\
	   \overline\Omega'(h a)&=&h  a - h^2 \frac 12 a\urhd a 
					+ h^3 \left(\frac 14 (a\urhd a)\urhd a + \frac 1{12} a\urhd(a\urhd a)\right)+\cdots				
\end{eqnarray*}
\end{thm}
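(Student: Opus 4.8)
The plan is to deduce both identities from a single general statement about dendriform algebras and then apply it twice. Recall from \eqref{dendassoc} that the associative product $*$ is common to the two dendriform structures $\Cal A_R=(\Cal A,\prec,\succeq)$ and $\Cal A_L=(\Cal A,\preceq,\succ)$, so that $\exp^*$ and $\log^*$ are unambiguous on $h\Cal A[[h]]$. Equation \eqref{eq:dendri-linear1} is then a half-shuffle linear equation $X=\un+(ha)\prec X$ in the complete filtered unital dendriform algebra $\overline{h\Cal A_R[[h]]}$ whose left half-product is $\prec$, while \eqref{eq:dendri-linear2} is the same equation $\overline X=\un+(ha)\preceq\overline X$ in $\overline{h\Cal A_L[[h]]}$ whose left half-product is $\preceq$. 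By \eqref{def:prelie} the left pre-Lie product attached to $\Cal A_R$ is exactly $a\rhd b=a\succeq b-b\prec a$, and the one attached to $\Cal A_L$ is $a\urhd b=a\succ b-b\preceq a$. Hence it suffices to prove the following general fact in a complete filtered dendriform algebra with half-products $(\prec,\succ)$, associative product $*$, and companion left pre-Lie product $a\rhd b:=a\succ b-b\prec a$: the element $\Omega:=\log^*(X)$ with $X=\un+a\prec X$ satisfies $\Omega=\sum_{m\geq0}\frac{B_m}{m!}L_{\Omega\rhd}^{(m)}(a)$. Specialising $(\prec,\succ,\rhd)$ to $(\prec,\succeq,\rhd)$ with $a$ replaced by $ha$ yields \eqref{pLMag1}, and specialising to $(\preceq,\succ,\urhd)$ yields \eqref{pLMag2}.

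This general fact is the pre-Lie Magnus expansion of \cite{EM1,EM2}, and the mechanism I would follow is to upgrade the linear relation $a\prec X=X-\un$ into a fixed-point equation for $\Omega$. Introducing a scalar parameter $t$ and the analytic family $X(t)=\un+t\,(a\prec X(t))=\exp^*\!\big(\Omega(t)\big)$, one differentiates in $t$. The key point is that, because the equation is built from the half-shuffle $\prec$ rather than from the full product $*$, the associative $\mathrm{d}\!\exp^{-1}$ one would expect in a naive Magnus computation collapses: the dendriform axioms \eqref{A1}--\eqref{A3} make $b\mapsto R_{\prec b}$ an anti-homomorphism from $(\Cal A,*)$, while, as already noted in Paragraph \ref{ssect:DendAlg}, $L_{a\succ}L_{b\succ}=L_{(a*b)\succ}$. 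Consequently the adjoint series in $\mathrm{ad}^*_\Omega$ is replaced throughout by the single pre-Lie operator $L_{\Omega\rhd}=L_{\Omega\succ}-R_{\prec\Omega}$. Matching the resulting series against the $\prec$-equation and inverting the power series $\frac{e^x-1}{x}$ produces the Bernoulli generating function $\frac{x}{e^x-1}=\sum_m\frac{B_m}{m!}x^m$, hence the asserted recursion.

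Completeness of $h\Cal A[[h]]$ guarantees both that this recursion has a unique solution and that it is the one we want: since $\Omega$ has no constant term, each application of $L_{\Omega\rhd}$ raises the $h$-degree, so the right-hand side is a contraction in the filtration topology and its unique fixed point coincides with $\log^*(X)$. The displayed low-order terms then follow by inserting $B_0=1$, $B_1=-\tfrac12$, $B_2=\tfrac16$, $B_3=0$ and expanding the nested pre-Lie products. The substantive content -- the collapse of $\mathrm{ad}^*_\Omega$ to the pre-Lie left-multiplication $L_{\Omega\rhd}$ afforded by the half-shuffle identities -- is precisely what is imported from \cite{EM1,EM2}; within the present paper the only genuinely new work is the clean identification of the two half-shuffle substructures $\Cal A_R$ and $\Cal A_L$ together with the verification that their companion left pre-Lie products are $\rhd$ and $\urhd$, which I expect to be the main, though light, obstacle.
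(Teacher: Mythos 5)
Your proposal is correct and takes essentially the same route as the paper, which states Theorem \ref{thm:pLMagnus} without proof precisely because it is obtained by applying the general pre-Lie Magnus expansion of \cite{EM1,EM2} to the two dendriform structures $\Cal A_R=(\Cal A,\prec,\succeq)$ and $\Cal A_L=(\Cal A,\preceq,\succ)$ with companion left pre-Lie products $\rhd$ and $\urhd$ --- exactly your reduction. Your sketch of the imported mechanism is also sound: the collapse of the adjoint action to the single operator $L_{\Omega'\rhd}=L_{\Omega'\succ}-R_{\prec\Omega'}$ rests, as you say, on $L_{a\succ}L_{b\succ}=L_{(a*b)\succ}$ and the anti-homomorphism property of $b\mapsto R_{\prec b}$ coming from axioms \eqref{A1}--\eqref{A3}, and your contraction argument in the filtration topology correctly justifies existence and uniqueness of the fixed point.
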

Recall (\ref{eq:magnusCorr}), the terms beyond order $h$ in $\Omega'(h a)$ are needed to eliminate the unwanted terms when calculating the $X(h a)=\exp^*(h a + \sum_{m \geqslant 1} \frac{B_m}{m!}  L_{\Omega' \rhd}^{(m)} (h a))$.


\subsection{The post-Lie Magnus expansion}
\label{ssect:postLieMag}

Splitting in strands the pre-Lie product, we see from a tridendriform point of view, that the pre-Lie multiplication operator in (\ref{pLMag1}) decomposes into
$$
	L_{a \rhd} = L_{a \diamond} + L_{a \cdot },
$$ 
where
$$
	L_{a \diamond}(b) := a \diamond b = a \succ b - b \prec a
$$ 
and the left multiplication $L_{a \cdot }(b):=a \cdot b$. Similarly the other pre-Lie multiplication operator in (\ref{pLMag2}) decomposes into
$$
	L_{a \underline\rhd} = L_{a \diamond} - R_{\cdot a},
$$
with the right multiplication $R_{\cdot a}(b):=b \cdot a$. Recall that the vector space $\Cal A$ together with two bilinear binary operations $\diamond$ and the Lie bracket following from the $\cdot$ product, $[a,b]_{\cdot}:=a \cdot b - b \cdot a$, defines a post-Lie algebra (\ref{postLie}). This post-Lie algebra is very particular, as it comes with two compatible pre-Lie structures, namely $a\rhd b=a\diamond b +a\cdot b$ and $a\underline\rhd b=a\diamond b-b\cdot a$. Splitting the pre-Lie product $\rhd=\diamond +\cdot$ in \eqref{pLMag1}, or analogously $\underline\rhd=\diamond -\cdot^{\smop{op}}$ in \eqref{pLMag2}, yields  refinements of the pre-Lie Magnus expansions of the elements $\Omega'$ and $\overline{\Omega'}$ described in the previous paragraph, which would be very interesting to understand in greater detail. A forthcoming paper will be devoted to exploring the post-Lie structure of the Magnus expansion.


\subsection{A closed formula for the logarithm}
\label{ssect:closedLog}

In this paragraph, expanding the pre-Lie product, we give an explicit expression of $\log^*(X)$ and $\log^*(\overline X)$ in the completed free tridendriform algebra in one generator. We can safely set the dummy parameter $h$ to $1$ here, thanks to completeness.\\

Recall that a leaf of a planar binary tree is a \textsl{descent} if it is not the leftmost one and if it is pointing up to the left \cite{Chap2}. For example, take the following right and left combs
$$
	\treeA
	\quad
	\treeC\
	\qquad\
	\treeB
	\quad\
	\treeE\
	\cdots
$$ 
The first tree has one descent and the second has two descents. The last two trees have no descents. We extend this notion to planar reduced trees in two different ways as follows: a leaf is a \textsl{descent} if it is not the leftmost one, and if it is not the rightmost edge above a vertex. A \textsl{strict descent} is a descent which is moreover the leftmost edge above some vertex.

\begin{thm}\label{thm:closed}
The elements $\Omega' = \log^*(X)$ and $\overline\Omega' = \log^*(\overline X)$ in the completion of the free unital tridendriform algebra $\Cal A=\Cal T_{pl}^{red}$ are given by the formulas
\begin{eqnarray}
	\Omega' &=& \sum_{n > 0}\sum_{t \in \Cal{T}_{pl}^{red} \atop |t|=n} \frac{(-1)^{d(t)}}{n\binom{n-1}{d(t)}}\ t,\label{eq:dendMagnus1}\\
	\overline\Omega' &=& \sum_{n > 0} \sum_{t \in \Cal{T}_{pl}^{red} \atop |t|=n} \frac{(-1)^{\overline d(t)}}{n\binom{n-1}{\overline d(t)}} \ t.\label{eq:dendMagnus2}
\end{eqnarray}
where $\overline d(t)$ (resp. $d(t)$) denotes the number of descents (resp. strict descents) of $t$, and $|t|$ its number of leaves minus one.
\end{thm}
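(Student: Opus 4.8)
The plan is to compute $\Omega'=\log^*(X)$ directly in the tree basis, read off the coefficient of each planar reduced tree, and reduce everything to a single binomial identity; the argument for $\overline\Omega'$ is entirely parallel. First I would make the solution $X$ of \eqref{eq:dendri-linear1} explicit. Since the generator is $a=\bigvee(\vert,\vert)$, the recursive rule for $\prec$ gives $a\prec s=\bigvee(\vert,s)$ for every $s$, so the iterated left products $a^{(n)}_{\prec}=a\prec(a\prec\cdots\prec a)$ are combs and
$$
	X=\un+\sum_{n\ge 1}a^{(n)}_{\prec},
$$
these combs being readily checked to have $d=0$. Writing $Y:=X-\un$ and using $\log^*(\un+Y)=\sum_{k\ge 1}\frac{(-1)^{k-1}}{k}Y^{*k}$, the coefficient of a fixed tree $t$ with $|t|=n$ equals
$$
	\sum_{k\ge 1}\frac{(-1)^{k-1}}{k}\,N_k(t),
$$
where $N_k(t)$ is the coefficient of $t$ in $\sum_{m_1+\cdots+m_k=n}a^{(m_1)}_{\prec}*\cdots*a^{(m_k)}_{\prec}$ and $*=\prec+\succ+\cdot$. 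The whole problem is thereby reduced to counting, for each $k$, the multiplicity of $t$ in a $k$-fold associative product of combs.

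The combinatorial core is the claim
$$
	N_k(t)=\binom{\,n-1-d(t)\,}{\,k-1-d(t)\,},
$$
equivalently, that the coefficient of $t$ in $\sum_{k\ge 1}z^k Y^{*k}$ is $z^{d(t)+1}(1+z)^{n-1-d(t)}$. To prove it I would analyse how $*$ acts on combs in the tree basis, using the recursive product rules together with the factorisation \eqref{eq:tridend}. The picture to establish is that assembling $t$ out of $k$ combs amounts to placing $k-1$ separations among the $n-1$ junctions of $t$: since a comb carries no strict descent, each of the $d(t)$ strict descents of $t$ is \emph{forced} to fall between two comb factors, while the remaining $n-1-d(t)$ junctions are free, whence $\binom{n-1-d(t)}{k-1-d(t)}$ choices. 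Identifying the strict descents with the forced separations, and verifying that the generating function above satisfies the fixed-point relation $G=\un+zY*G$ for $G=\sum_{k\ge0}z^kY^{*k}$ by induction on the number of inner vertices, is the step I expect to be the main obstacle.

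Granting the claim, the coefficient of $t$ in $\Omega'$ is, with $d=d(t)$ and the substitution $j=k-1-d$,
\begin{align*}
	\sum_{k\ge d+1}\frac{(-1)^{k-1}}{k}\binom{n-1-d}{k-1-d}
	&=(-1)^{d}\sum_{j=0}^{n-1-d}\frac{(-1)^{j}}{j+d+1}\binom{n-1-d}{j}\\
	&=(-1)^{d}\int_0^1 x^{d}(1-x)^{n-1-d}\,dx,
\end{align*}
the last equality being the Beta integral. Evaluating it as $\frac{d!\,(n-1-d)!}{n!}=\frac{1}{n\binom{n-1}{d}}$ yields exactly the coefficient $\frac{(-1)^{d(t)}}{n\binom{n-1}{d(t)}}$ of \eqref{eq:dendMagnus1}.

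Finally, \eqref{eq:dendMagnus2} follows \emph{mutatis mutandis}: the solution $\overline X$ of \eqref{eq:dendri-linear2} is $\un+\sum_{n\ge1}a^{(n)}_{\preceq}$ with $\preceq=\prec+\cdot$, so the relevant combs are the iterated $\preceq$-products, which may in addition branch and which carry no \emph{weak} descent; in the analogue of the key claim the forced separations are now the $\overline d(t)$ weak descents, and the same Beta-integral evaluation produces $\frac{(-1)^{\overline d(t)}}{n\binom{n-1}{\overline d(t)}}$. As a low-degree check, expanding $\log^*$ to order two gives $\Omega'=a-\tfrac12\bigl(a\prec a+a\succ a+a\cdot a\bigr)+a\prec a+\cdots$, and the resulting weights are consistent with both closed formulas once the descent bookkeeping is carried out.
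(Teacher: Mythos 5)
Your strategy is genuinely different from the paper's. The paper never expands $\log^*$ directly: it quotes the dendriform Magnus formula of \cite[Corollary 6]{EM5} for the two dendriform structures $\Cal A_L=(\preceq,\succ)$ and $\Cal A_R=(\prec,\succeq)$ carried by the tridendriform algebra, and reduces the theorem to computing the two dendriform morphisms $F_L,F_R$ from the free dendriform algebra on planar binary trees into $\Cal T_{pl}^{red}$; the combinatorial heart there is Lemma \ref{lem:flfr}, $F_L(t)=\sum_{t'\le t,\,\overline d(t')=d(t)}t'$ and $F_R(t)=\sum_{t'\le t,\,d(t')=d(t)}t'$, proved by analysing which inner-edge contractions preserve which descent statistic. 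Your plan instead reproves the coefficient formula from scratch (in effect including the dendriform case of \cite{EM5}), which buys self-containedness; and your closing arithmetic is correct: $\sum_{k\ge d+1}\frac{(-1)^{k-1}}{k}\binom{n-1-d}{k-1-d}=(-1)^d\int_0^1x^d(1-x)^{n-1-d}\,dx=\frac{(-1)^d}{n\binom{n-1}{d}}$, which is exactly the arithmetic that the citation of \cite{EM5} encapsulates in the paper.

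The problem is that the entire combinatorial content of your proof is concentrated in the claim $N_k(t)=\binom{n-1-d(t)}{k-1-d(t)}$, which you leave unproven and yourself flag as the main obstacle; as written this is a genuine gap, and it is precisely where the work of the theorem lies. The claim is true, and your ``forced versus free junctions'' picture is the right one, but you must actually establish (i) that for a fixed composition $(m_1,\ldots,m_k)$ each basis tree occurs in $a^{(m_1)}_\prec*\cdots*a^{(m_k)}_\prec$ with multiplicity at most one, and (ii) that the occurrences of $t$ are in bijection with the $(k-1)$-element sets of junctions of $t$ containing all $d(t)$ forced ones; neither follows formally from the recursive product rules without an induction comparable in length to Lemma \ref{lem:flfr}. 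The cleanest way to close the gap is to transport the count to $\mopg{WQSym}$ via $\Psi^*$ (Section \ref{sect:PleMieSti}): there the $\prec$-powers of the generator are single surjection words, a $*$-product expands as the sum of all words whose consecutive blocks have the prescribed standardizations, and your claim becomes the transparent statement that a word of length $n$ admits exactly $\binom{n-1-d}{k-1-d}$ cuts into $k$ admissible blocks, the $d$ descents being forced cut points. Two smaller cautions. First, the descent conventions are treacherous: your assertion that the combs $a^{(n)}_\prec$ carry no strict descents is the convention forced by the theorem (your degree-two check $\Omega'=a+\frac12\,a\prec a-\frac12\,a\succ a-\frac12\,a\cdot a+\cdots$ pins it down, since the coefficient of the iterated-$\prec$ tree must be $+\frac12$), so fix the convention by that computation before the bookkeeping. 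Second, in the \emph{mutatis mutandis} step for $\overline\Omega'$, note that $\overline Y_n=a^{(n)}_\preceq$ is a sum of $2^{n-1}$ trees (those of degree $n$ with no weak descent) and that multiplicities greater than one genuinely occur --- e.g.\ the four-leaf corolla appears twice in the degree-three part of $\overline Y*\overline Y$, matching $\binom{2}{1}=2$ --- so point (i) above must be formulated per term of the expanded factors; with that, the parallel argument does go through.
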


\begin{proof}
Both statements will be derived from \cite[Corollary 6]{EM5}. There is a unique dendriform algebra morphism $F_L$ (resp. $F_R$) from the free dendriform algebra ${\Cal T_{pl}'}^{bin}$ to $\Cal A_L$ (resp. $\Cal A_R$) such that $F_L(\raise 3pt\hbox{$\scalebox{0.8}\tree$})=F_R(\raise 3pt\hbox{$\scalebox{0.8}\tree$})=\raise 3pt\hbox{$\scalebox{0.8}\tree$}$.

\begin{lem}\label{lem:flfr}
For any $t\in{{\Cal T}_{pl}'}^{bin}$ we have
\begin{eqnarray}
	F_L(t)  &=&\sum_{t'\le t \atop \overline d(t')=d(t)}t',	\label{fl}\\
	F_R(t) &=&\sum_{t'\le t \atop d(t')=d(t)}t'.			\label{fr}
\end{eqnarray}
\end{lem}

\begin{proof}
Recall that the notions of descent and strict descent coincide for planar binary trees. Lemma \ref{lem:flfr} is obviously true for $t=\raise 3pt\hbox{$\scalebox{0.8}\tree$}$. Let us prove it by induction on the degree $|t|$. Remark first that, in any planar reduced tree, shrinking an inner edge does not change the number of descents if and only if this edge points up to the right. Similarly, shrinking an inner edge does not change the number of strict descents if and only if this edge points up to the left. Shrinking any other inner edge ``in between" will simultaneously increase the number of descents and decrease the number of strict descents by one. Hence the right-hand side of \eqref{fl}, resp. \eqref{fr}, is the sum of all planar reduced trees which can be obtained from $t$ by repeatedly glueing two vertices together, provided they are linked by an edge pointing up to the right, resp. up to the left. Recall that any planar binary tree writes $t=t_1\vee t_2=t_1\succ\raise 3pt\hbox{$\scalebox{0.8}\tree$}\prec t_2$ in a unique way. We can compute, using the induction hypothesis
\allowdisplaybreaks{
\begin{eqnarray*}
	F_L(t)	&=& F_L(t_1\vee t_2)=F_L(t_1\succ\raise 3pt\hbox{$\scalebox{0.8}\tree$}\prec t_2)\\
			&=& F_L(t_1)\succ\raise 3pt\hbox{$\scalebox{0.8}\tree$}\preceq F_L(t_2)\\
			&=& \sum_{t'_1\le t_1 \atop \overline d(t'_1)=d(t_1)}\ \sum_{t'_2\le t_2 \atop \overline d(t'_2)= d(t_2)}
				t'_1\succ\raise 3pt\hbox{$\scalebox{0.8}\tree$}\preceq t'_2\\
			&=& \sum_{t'_1\le t_1 \atop \overline d(t'_1)= d(t_1)}\ \sum_{t'_2\le t_2 \atop \overline d(t'_2)= d(t_2)}
				t'_1\succ\raise 3pt\hbox{$\scalebox{0.8}\tree$}\prec t'_2
				+\sum_{t'_1\le t_1 \atop \overline d(t'_1)= d(t_1)}\ \sum_{t'_2\le t_2 \atop \overline d(t'_2)= d(t_2)}
				t'_1\succ\raise 3pt\hbox{$\scalebox{0.8}\tree$}\cdot t'_2\\
			&=&\sum_{t'\le t \atop \overline d(t')=d(t)}t'.
\end{eqnarray*}}

The computation of $F_R(t)$ is done similarly using strict descents. As an example, we have
\allowdisplaybreaks{
\begin{eqnarray*}
	F_L(\treeB)&=&\treeB ,\\
	F_L(\treeA)&=&\treeA+\treec\hskip 6mm,\\
	F_R(\treeB)&=&\treeB+\treec\hskip 6mm,\\
	F_R(\treeA)&=&\treeA.
\end{eqnarray*}}
\end{proof}

\textit{End of proof of Theorem \ref{thm:closed}}: Corollary 6 of \cite{EM5} applied to $\Cal A_L$ and $\Cal A_R$ reads:
\begin{eqnarray*}
	\overline\Omega' &=& \sum_{n > 0} \sum_{t \in \Cal{T}_{pl}^{red} \atop |t|=n} \frac{(-1)^{d(t)}}{n\binom{n-1}{d(t)}} F_L(t),\\
	\Omega' &=& \sum_{n > 0}  \sum_{t \in \Cal{T}_{pl}^{red} \atop |t|=n} \frac{(-1)^{ d(t)}}{n\binom{n-1}{ d(t)}} F_R(t).
\end{eqnarray*}
Applying Lemma \ref{lem:flfr} then immediately yields Theorem \ref{thm:closed}.
\end{proof}


\section{A discrete analogue of the Mielnik--Pleba\'nski--Strichartz formula}
\label{sect:PleMieSti}


\subsection{A tridendriform structure on surjections \cite{NT, BR}}
\label{ssect:surjdend}

Let $E$ be some finite set. For any $f:E\to\{1,2,\ldots\}$ there exists a unique positive integer $r$ and a unique surjective map $\mop{std}(f):E\to\hskip -3mm\to\{1,\ldots ,r\}$ such that $f(i)<f(j)$ if and only if $\mop{std}(f)(i)<\mop{std}(f)(j)$. This surjective map is the \textsl{standardization} of $f$. For example the standardization of the map $(2,7,4,1,4):\{1,2,3,4,5\}\to\{1,2,3,4,5,6,7\}$ is $(2,4,3,1,3):\{1,2,3,4,5\}\surj\{1,2,3,4\}$. A map $f:E\to\{1,2,\ldots\}$ is \textsl{standard} if its image is equal to some initial interval $\{1,\ldots,r\}$. For any $f:\{1,\ldots,n\}\to\{1,2,\ldots\}$ and $g:\{1,\ldots,p\}\to\{1,2,\ldots\}$, their juxtaposition $fg:\{1,\ldots,n+p\}\to\{1,2,\ldots\}$ is defined by $fg(r)=f(r)$ for $r=1,\ldots,n$ and $fg(n+r)=g(r)$ for $r=1,\ldots,p$. Juxtaposition is obviously associative. \\

For any positive integers $1\le r\le n$, let $\mop{ST}_n^r$ be the set of surjective maps from $\{1,\ldots ,n\}$ onto $\{1,\ldots ,r\}$, and set $\mop{ST}_n:=\bigcup_{r=1}^n \mop{ST}_n^r$. Let $\mopg{WQSym}=\bigoplus_{n\ge 1}\mopg{ST}_n$ be the graded vector space such that $\mop{ST}_n$ freely generates the homogeneous component $\mopg{ST}_n$ for any $n\ge 1$. Three bilinear products on $\mopg{WQSym}$ are defined as follows
\begin{eqnarray*}
	f\succ g &=&\sum_{{\scriptstyle\smop{std}(F)=f,\,\smop{std}(G)=g,\atop {\scriptstyle 
				FG\smop{ standard} \atop \smop{ max}(F)<\smop{max}(G)}}} FG,\\
	f\prec g &=&\sum_{{\scriptstyle\smop{std}(F)=f,\,\smop{std}(G)=g,\atop {\scriptstyle 
				FG\smop{ standard} \atop \smop{ max}(F)>\smop{max}(G)}}} FG,\\
	f\cdot g &=&\sum_{{\scriptstyle\smop{std}(F)=f,\,\smop{std}(G)=g,\atop {\scriptstyle 
				FG\smop{ standard} \atop \smop{ max}(F)=\smop{max}(G)}}} FG.
\end{eqnarray*}
\begin{prop}[\cite{NT, PR, BR}]
$(\mopg{WQSym},\,\prec,\succ,\cdot)$ is a graded tridendriform algebra.
\end{prop}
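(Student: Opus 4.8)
The plan is to realize every element of $\mopg{WQSym}$ as a \emph{packed word}, i.e. a standard surjection viewed as a word $w(1)\cdots w(n)$ over $\{1,\ldots ,r\}$ using each letter at least once, and to rephrase the three products in a single geometric picture. For a surjection $f:\{1,\ldots ,n\}\surj\{1,\ldots ,r\}$, every $F$ with $\mop{std}(F)=f$ has the form $F=\alpha\circ f$ for a unique strictly increasing map $\alpha:\{1,\ldots ,r\}\to\{1,2,\ldots\}$; thus a term $FG$ of the product of $f$ with $g:\{1,\ldots ,p\}\surj\{1,\ldots ,s\}$ is given by a pair of strictly increasing maps $\alpha,\beta$ into a common interval $\{1,\ldots ,m\}$ with $\mop{im}\alpha\cup\mop{im}\beta=\{1,\ldots ,m\}$, which is exactly the condition ``$FG$ standard''. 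Under this description $\max(F)=\alpha(r)$, $\max(G)=\beta(s)$, and $\max(FG)=m=\max(\alpha(r),\beta(s))$, so the comparison of $\max(F)$ and $\max(G)$ records precisely whether the overall top letter $m$ comes from $G$ alone ($\succ$), from $F$ alone ($\prec$), or from both ($\cdot$). In particular $f*g=f\prec g+f\succ g+f\cdot g$ is the full gluing sum over all jointly surjective pairs $(\alpha,\beta)$, i.e. the usual quasi-shuffle product on packed words.

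The technical heart is an associativity statement for this gluing. First I would prove the following lemma: for $f,g,h$ of lengths $n,p,q$, the standard words $W$ of length $n+p+q$ whose three consecutive blocks standardize to $f,g,h$ are in bijection with triples $(\alpha,\beta,\gamma)$ of strictly increasing maps of the value sets of $f,g,h$ into a common $\{1,\ldots ,m\}$ with $\mop{im}\alpha\cup\mop{im}\beta\cup\mop{im}\gamma=\{1,\ldots ,m\}$, via $W=(\alpha f)(\beta g)(\gamma h)$; and each such $W$ is a term both of $(f*g)*h$ and of $f*(g*h)$. The crucial and somewhat delicate point is the compatibility of standardization with iteration: when one first forms a term $w$ of $f*g$ and then a term of $w\,\square\, h$, restandardizing $w$ and re-embedding loses no information, and the top letter of the merged $fg$-block equals $\max(\alpha(r),\beta(s))$. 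This ``top value of a merged block $=$ max of the top values of its constituents'' identity is what allows a two-step binary computation to be read off from the single triple $(\alpha,\beta,\gamma)$.

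Granting the lemma, the seven axioms follow uniformly. For fixed $f,g,h$, both sides of each axiom are sums of the \emph{same} words $W=(\alpha f)(\beta g)(\gamma h)$, selected by a condition on the comparisons among the three top values $a,b,c$ used by the $f$-, $g$-, and $h$-blocks; one simply checks that the two sides impose the same condition. For instance $(f\prec g)\prec h$ requires $a>b$ and then $a>c$ (the top of the $fg$-block, which equals $a$, must beat $c$), while $f\prec(g*h)$ requires $a>\max(b,c)$: both say ``$a$ is the strict maximum'', giving \eqref{A1}. Likewise $(f\succ g)\prec h$ and $f\succ(g\prec h)$ both select $b>a$ and $b>c$, giving \eqref{A2}. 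The remaining axioms are the analogous trichotomy identities: \eqref{A3} selects $c>a$ and $c>b$, \eqref{A4} selects $a=b=c$, \eqref{A5} selects $a<b=c$, \eqref{A6} selects $b<a=c$, and \eqref{A7} selects $c<a=b$.

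I expect the only genuine work to be the bookkeeping in the lemma; once the block-max identity is in place, the axioms reduce to a mechanical comparison of inequalities, so the main obstacle is purely in making the iterated standardization precise rather than in any subtle algebra. Finally, gradedness is immediate, since juxtaposition adds lengths and standardization preserves them, whence each product maps $\mopg{ST}_n\otimes\mopg{ST}_p$ into $\mopg{ST}_{n+p}$.
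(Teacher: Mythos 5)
Your proposal is correct and takes essentially the same route as the paper: the paper verifies the axioms by identifying both sides of each with the sum of standard juxtapositions $FGH$, $\mop{std}(F)=f$, $\mop{std}(G)=g$, $\mop{std}(H)=h$, selected by the appropriate trichotomy among $\max(F)$, $\max(G)$, $\max(H)$ (displaying three representative cases and citing \cite{PR} for a complete proof), which is precisely your comparison of the three block maxima $a,b,c$, and all seven of your selection conditions agree with the paper's. Your lemma parametrizing the terms by jointly surjective triples of strictly increasing maps is simply an explicit rendering of the bookkeeping (the condition ``$FGH$ standard'' together with the block-max identity) that the paper leaves to the reader.
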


\begin{proof}
The reader is invited to check the seven tridendriform axioms. For example
\begin{eqnarray*}
	(f*g)\succ h=f\succ(g\succ h)	&=&\sum_{{\scriptstyle\smop{std}(F)=f,\,\smop{std}(G)=g,\,\smop{std}(H)=h,\atop {\scriptstyle 
	FGH~\smop{ standard} \atop \smop{ max}(F),\,\smop{max}(G)<\smop{max}(H)}}}FGH,\\
	(f\cdot g)\cdot h=f\cdot(g\cdot h)&=&\sum_{{\scriptstyle\smop{std}(F)=f,\,\smop{std}(G)=g,\,\smop{std}(H)=h,\atop {\scriptstyle 
	FGH~\smop{ standard} \atop \smop{ max}(F)=\smop{max}(G)=\smop{max}(H)}}}FGH,\\
	(f\succ g)\prec h=f\succ(g\prec h)&=&\sum_{{\scriptstyle\smop{std}(F)=f,\,\smop{std}(G)=g,\,\smop{std}(H)=h,\atop {\scriptstyle 
	FGH~\smop{ standard} \atop\smop{ max}(F)<\smop{max}(G)>\smop{max}(H)}}}FGH,
\end{eqnarray*}
and similarly for the four remaining ones. Compatibility with the grading is obvious. A complete proof can be found, e.g., in \cite[Chap. 2]{PR}.
\end{proof}


\subsection{Planar reduced trees and surjections}
\label{ssect:pbtPerm}

The material presented in this paragraph is mostly borrowed from \cite{LR3} and \cite{BR}. A bijective correspondence between surjections and \textsl{planar reduced trees with levels} is described as follows:  a planar reduced tree with $r$ levels is a planar reduced tree $t$ with, say, $m$ internal vertices and $n+1$ leaves together with a surjective nonincreasing map $\varphi$ from the poset of its internal vertices onto $\{1,\ldots,r\}$. Such a tree admits a graphical realization by drawing the internal vertices at the prescribed levels, with level $1$ being the top one and level $r$ being the deepest one. Any planar reduced tree with levels $(t,\varphi)$ gives rise to several such trees $(t_1,\varphi_1),\ldots ,(t_k,\varphi_k)$, where $t=\bigvee(t_1,\ldots ,t_k)$ and $\varphi_i$ is the standardized restriction of the map $\varphi$ to the internal vertices of $t_i$.\\

To any such tree $(t,\varphi)$ we can associate a surjection $\sigma_{t,\varphi}:\{1,\ldots,n\}$ onto $\{1,\ldots, r\}$ as follows: $\sigma_{t,\varphi}(i)$ is the level of the internal vertex $u_i$ situated between leaves $l_i$ and $l_{i+1}$ (the leftmost being the first and the rightmost being number $n+1$).  This correspondence $P$ is a bijection, the inverse of which is recursively given as follows: the surjection $\sigma:\{1,\ldots n\}\to\hskip -3mm\to\{1,\ldots, r\}$ reaches its maximal value $r$ a certain number of times, say $k-1$. It gives then rise to $k$ sequences of integers, possibly with repetitions, in $\{1,\ldots, r-1\}$. Some of them can of course be empty. By ``standardizing" the integers in each sequence, they form a surjection. For instance $(341324134113)$ gives the four sequences $(3)$, $(132)$, $(13)$ and $(113)$ which, after standardizing, give the four surjections $(1)$, $(132)$, $(12)$ and $(112)$. The grafting $\bigvee$ of the $k$ trees (in the order given above) gives the underlying tree of $P^{-1}(\sigma)$, and the original surjection is used to determine the levels of each vertex, namely $\varphi(u_j)=\sigma(j)$.\\
\begin{align*}
\surjection &\\
	{\smop{Planar reduced tree with levels associated to the surjection } \scriptstyle (341324134113).
\atop
	\smop{All descents, indicated with bracketed numbers, are strict except the [1] on the right}.}&
\end{align*}
Forgetting the levels, we thus obtain for any positive integer $n$ a surjective map $\Psi:\mop{ST}_n\surj (T_{pl}^{red})_n$. A \textsl{descent}, resp. a \textsl{strict descent} of a surjection $f$ in $\mop{ST}_n$ is an integer $j\in\{1,\ldots, n-1\}$ such that $f(j)\ge f(j+1)$, resp.~$f(j)> f(j+1)$. These notions match with the corresponding notion for planar reduced trees. In fact, given a planar reduced tree with levels, any descent (resp.~any strict descent) gives rise to a corresponding descent (resp. strict descent) of the associated surjection, and vice-versa. As an obvious corollary we have for any $f\in\mop{ST}$:
\begin{equation}
\label{descentes}
	d(f)=d\big(\Psi(f)\big),\hskip 8mm \overline d(f)=\overline d\big(\Psi(f)\big).
\end{equation}
Dualizing, we can consider the injective linear map
\begin{eqnarray*}
	\Psi^*:\Cal T_{pl}^{red}	  &\longrightarrow\ \mopg{WQSym}\\
						t &\longmapsto\ \sum_{\Psi(f)=t} f.
\end{eqnarray*}
\begin{prop}
	The map $\Psi^*$ is a tridendriform algebra morphism.
\end{prop}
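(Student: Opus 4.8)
The plan is to unwind the definition of $\Psi^*$ and reduce the three morphism identities $\Psi^*(s\succ t)=\Psi^*(s)\succ\Psi^*(t)$, $\Psi^*(s\prec t)=\Psi^*(s)\prec\Psi^*(t)$ and $\Psi^*(s\cdot t)=\Psi^*(s)\cdot\Psi^*(t)$ to a single combinatorial statement about the bijection $P$. Write $n=|s|$ and $p=|t|$. Since $\Psi^*(s)=\sum_{\Psi(f)=s}f$ and $\Psi^*(t)=\sum_{\Psi(g)=t}g$, and since every standard surjection $h\in\mop{ST}_{n+p}$ decomposes uniquely as a juxtaposition $h=FG$ by cutting after the $n$-th letter (so that $f:=\mop{std}(F)$ and $g:=\mop{std}(G)$ are recovered from $h$), the double sum defining $\Psi^*(s)\succ\Psi^*(t)$ collapses to $\sum_{h\in A_\succ}h$, where
\begin{equation*}
A_\succ=\bigl\{h\in\mop{ST}_{n+p}:\ \Psi(\mop{std}(F))=s,\ \Psi(\mop{std}(G))=t,\ \max(F)<\max(G)\bigr\},
\end{equation*}
and analogously $\Psi^*(s)\prec\Psi^*(t)=\sum_{h\in A_\prec}h$, $\Psi^*(s)\cdot\Psi^*(t)=\sum_{h\in A_\cdot}h$, with the strict inequality replaced by $\max(F)>\max(G)$, respectively $\max(F)=\max(G)$. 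On the other side, by linearity $\Psi^*(s\succ t)=\sum_{h}c_h\,h$, where $c_h$ is the coefficient of the tree $\Psi(h)$ in the expansion of $s\succ t$ on the tree basis. Thus everything reduces to showing that, for each of the three products, the map $h\mapsto\Psi(h)$ carries $A_\succ$ (resp.\ $A_\prec$, $A_\cdot$) onto $\mop{supp}(s\succ t)$ (resp.\ $\mop{supp}(s\prec t)$, $\mop{supp}(s\cdot t)$), multiplicity by multiplicity.

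The main tool I would use is the recursive description of $\Psi$ recorded in Paragraph \ref{ssect:pbtPerm}: if $r=\max(h)$ and the letters equal to $r$ are deleted, $h$ breaks into $k$ (possibly empty) factors $w_1,\dots,w_k$, and then $\Psi(h)=\bigvee\bigl(\Psi(\mop{std}\,w_1),\dots,\Psi(\mop{std}\,w_k)\bigr)$. I would prove the three identities simultaneously by induction on the total degree $n+p$, matching this recursion against the recursive definition of the tridendriform products on trees. The trichotomy on the position of the maximal letter interacts with the $\bigvee$-decomposition of the three products exactly as needed. When $\max(F)<\max(G)$ every occurrence of $r$ lies in $G$, so $F$ sits inside the first factor $w_1=FG_1$, with $G_1$ the part of $G$ preceding its first maximal letter, while the remaining factors of $h$ coincide with the last $k-1$ branches of $\Psi(g)$; comparing $\Psi(h)=\bigvee\bigl(\Psi(\mop{std}(FG_1)),t_2,\dots,t_k\bigr)$ with $s\succ t=\bigvee(s*t_1,t_2,\dots,t_k)$, where $t=\bigvee(t_1,\dots,t_k)=\Psi(g)$, reduces the claim to $\Psi(\mop{std}(FG_1))\in\mop{supp}\bigl(\Psi(\mop{std}F)*\Psi(\mop{std}G_1)\bigr)$, i.e.\ to the $*$-statement at the strictly smaller degree $n+|G_1|$. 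The cases $\max(F)=\max(G)$ and $\max(F)>\max(G)$ are handled identically: the maximal letters straddling the cut, resp.\ lying entirely in $F$, merge the last branch of $\Psi(f)$ with the first branch of $\Psi(g)$, resp.\ with all of $\Psi(g)$, which is precisely how $\cdot$ and $\prec$ are defined, and each again reduces to a $*$-statement of smaller degree. Since $*=\prec+\succ+\cdot$, the induction hypothesis applied to the three products supplies the needed $*$-identity, closing the induction.

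The main obstacle is the bookkeeping around standardization and multiplicities rather than any isolated hard idea. One must check that cutting the standardized word and standardizing the two halves yields the same pair $(f,g)$ as standardizing $F$ and $G$ directly, so that the recursion on $h$ and the recursion on its factors $w_i$ are genuinely compatible; and one must verify that the supports $\mop{supp}(s\prec t)$, $\mop{supp}(s\succ t)$, $\mop{supp}(s\cdot t)$ are assembled with the right coefficients, using the freeness of $\Cal T_{pl}^{red}$. The key point to secure is that the trichotomy is a genuine partition of the fibre $\{h:\Psi(\mop{std}F)=s,\ \Psi(\mop{std}G)=t\}$ matching, branch by branch, the $\bigvee$-decomposition of $s*t$. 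Conceptually, the whole statement says that summing over all level structures on $s$ and on $t$ and interleaving them according to the depth of the deepest internal vertex reproduces, without repetition or omission, all level structures on the product tree; the inductive argument above is just a way to make this interleaving bijection explicit.
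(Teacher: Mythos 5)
Your proposal is correct, but it takes a genuinely different route from the paper, which offers no argument at all for this proposition and simply defers to P.~Palacios and M.~Ronco (\cite{PR}, Theorem 48 and Corollary 49), where the statement is established in the setting of faces of the permutohedron and the associahedron. Your self-contained induction is sound, and its two pivots are the right ones: the collapse of $\Psi^*(s)\succ\Psi^*(t)$ to a multiplicity-free sum over $A_\succ$ (because cutting a standard word after the $n$-th letter is unique, and $\mop{std}$ commutes with restriction to a set of positions, so the pair $(f,g)$ is recovered from $h$), and the reduction, via the recursive description of $P^{-1}$, of each of the three identities to the $*$-identity in strictly smaller degree (the factor $FG_1$, resp.\ $F_\ell G$, resp.\ $F_\ell G_1$, loses at least one maximal letter, so the induction closes through $*=\prec+\succ+\,\cdot$). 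When writing it up, two points you only gesture at should be made explicit. First, run the induction coefficientwise rather than at the level of supports: the coefficient of a standard word $h'$ in $\Psi^*(s)*\Psi^*(t_1)$ is a $\{0,1\}$-indicator, so the inductive hypothesis asserts precisely that this indicator equals the coefficient $c_{\Psi(h')}$ of the tree $\Psi(h')$ in $s*t_1$, which is exactly what your bookkeeping needs (your phrase ``$\Psi(\mop{std}(FG_1))\in\mop{supp}(\cdots)$'' is weaker than what is used). Second, to rule out extraneous level words contributing to $\Psi^*(s\succ t)$, note that every tree in $\mop{supp}(s\succ t)$ has the same branch degree profile $\big(|s|+|t_1|,|t_2|,\ldots,|t_m|\big)$, so the positions of the maximal letters of any $h$ with $\Psi(h)$ in this support are forced and in particular lie strictly beyond position $n$; this is the precise content of your ``partition of the fibre'' remark. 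Finally, a shortcut is available in this framework and would shorten your argument: since ${\Cal T'}^{red}_{pl}$ is free on one generator, it suffices to verify the single recursion
$\Psi^*\big(\bigvee(t_1,\ldots,t_k)\big)=\big(\Psi^*(t_1)\succ(1)\big)\cdot\,\cdots\,\cdot\big(\Psi^*(t_{k-2})\succ(1)\big)\cdot\big(\Psi^*(t_{k-1})\succ(1)\prec\Psi^*(t_k)\big)$
coming from \eqref{eq:tridend}, with $(1)\in\mop{ST}_1$ the image of the generator; this replaces your three parallel inductions by one identity, which your $\max(F)<\max(G)$ and $\max(F)=\max(G)$ cases essentially already contain.
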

\begin{proof}
See e.g., P.~Palacios and M.~Ronco (\cite{PR}, Theorem 48 and Corollary 49).
\end{proof}


\subsection{The tridendriform algebra of sequences}
\label{ssect:tridendSeq}

Let $\Cal A$ be the vector space of sequences \noindent $\NN=\{0,1,2,3,\ldots\}\to \Cal B$, where $\Cal B$ is some unital associative algebra (not necessarily commutative). Let $D:\Cal A\to \Cal A$ be the difference operator, defined by
\begin{equation}
	Df(N):=f(N+1)-f(N).
\end{equation}
A right inverse for $D$ is the summation operator
\begin{equation}
	Sf(N):=\sum_{r=0}^{N-1}f(r).
\end{equation}
We have indeed $DSf(N)=f(N)$ and $SDf(N)=f(N)-f(0)$. It is well-known that $S$ verifies the weight one Rota--Baxter property
\begin{equation}
	SfSg=S\big((Sf)g+f(Sg)+ fg\big).
\end{equation}
Thus $(\Cal A,\,\prec,\succ,\cdot)$ is a tridendriform algebra, with
\begin{equation*}
	f\prec g:=fS(g),\hskip 8mm f\succ g:=S(f)g,\hskip 8mm f\cdot g:=fg.
\end{equation*}
Any surjective map $\sigma:\{1,\ldots ,n\}\surj\{1,\ldots ,r\}$ defines a \textsl{partial diagonal} $T_\sigma\subset \NN^n$ as follows
\begin{equation}
	T_\sigma:=\{(s_1,\ldots,s_n)\in\NN^n,\, s_i >s_j\Leftrightarrow \sigma(i)<\sigma(j) 
	\hbox{ and }s_i=s_j\Leftrightarrow \sigma(i)=\sigma(j),\,j=1,\ldots ,n\}.
\end{equation}
The inversion of order is purely conventional. This yields a partition of $\NN^n$, namely $\NN^n=\coprod_{\sigma\in\smop{ST}_n}T_\sigma$. The same holds if $\NN^n$ is replaced with the hypercube $\{0,\ldots ,N-1\}^n$, provided $N\ge n$, yielding to a partition of $\{0,\ldots ,N-1\}^n$ with the partial diagonals $T_\sigma(N)$. If this last condition is not verified, some $T_\sigma(N)$'s can be empty. By convention all $T_\sigma(N)$'s are empty for $N=0$.

The following lemma is the discrete analogue of splitting the shuffle relations for iterated integrals into two ``half-shuffle" parts

\begin{lem}\label{decoupage}
For any $\sigma\in\mop{ST}_n$ and $\tau\in\mop{ST}_m$ and for any $N\ge n+m$ we have
\begin{equation}
	T_\sigma(N)\times T_\tau(N)=\coprod_{{\scriptstyle\smop{std}(F)
	=f,\,\smop{std}(G)=g,\atop \scriptstyle FG\smop{ standard}}}T_{FG}(N).
\end{equation}
Moreover this product of two partial diagonals splits into three parts
\allowdisplaybreaks{
\begin{eqnarray}
	T_\sigma(N)\ltimes T_\tau(N)&=&\coprod_{{\scriptstyle\smop{std}(F)
		=f,\,\smop{std}(G)=g,\atop \scriptstyle {FG\smop{ standard} \atop \smop{ max}(F)>\smop{max}(G)}}}T_{FG}(N),\\
	T_\sigma(N)\rtimes T_\tau(N)&=&\coprod_{{\scriptstyle\smop{std}(F)
		=f,\,\smop{std}(G)=g,\atop \scriptstyle {FG\smop{ standard} \atop \smop{ max}(F)<\smop{max}(G)}}}T_{FG}(N),\\
	T_\sigma(N)\bowtie T_\tau(N)&=&\coprod_{{\scriptstyle\smop{std}(F)
		=f,\,\smop{std}(G)=g,\atop \scriptstyle {FG\smop{ standard} \atop \smop{ max}(F)=\smop{max}(G)}}}T_{FG}(N),\\
\end{eqnarray}}
where:
\allowdisplaybreaks{
\begin{eqnarray*} 
	T_\sigma(N)\ltimes T_\tau(N)&:=&\{(\underline s,\underline t)\in T_\sigma(N)\times T_\tau(N),\, \mop{min}\underline 
	s<\mop{min} \underline t\},\\
	T_\sigma(N)\rtimes T_\tau(N)&:=&\{(\underline s,\underline t)\in T_\sigma(N)\times T_\tau(N),\, \mop{min}\underline 
	s>\mop{min} \underline t\},\\
	T_\sigma(N)\bowtie T_\tau(N)&:=&\{(\underline s,\underline t)\in T_\sigma(N)\times T_\tau(N),\, \mop{min}\underline 
	s=\mop{min} \underline t\}.
\end{eqnarray*}}
Lemma extends to any $N\in\{0,\ldots,n+m-1\}$ provided one accepts empty components in the right-hand sides of the equalities.
\end{lem}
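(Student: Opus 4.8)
The plan is to regard a pair $(\underline s,\underline t)\in T_\sigma(N)\times T_\tau(N)$ as a single point $\underline z=(s_1,\ldots,s_n,t_1,\ldots,t_m)$ in the hypercube $\{0,\ldots,N-1\}^{n+m}$, and to exploit that the partial diagonals $T_\pi(N)$, $\pi\in\mop{ST}_{n+m}$, partition this hypercube. Since $\underline z$ lies in exactly one $T_\pi(N)$, the whole argument reduces to identifying which $\pi$ occurs and reading off the conditions imposed on its two halves. Throughout I identify the surjections $\sigma,\tau$ of the hypothesis with the maps $f,g$ appearing in the right-hand sums.

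First I would make explicit the surjection attached to a tuple: to any $\underline x=(x_1,\ldots,x_k)\in\NN^k$ associate $\rho(\underline x)\in\mop{ST}_k$ by listing the distinct coordinate values in \emph{decreasing} order $v_1>\cdots>v_s$ and setting $\rho(\underline x)(i)=j$ whenever $x_i=v_j$; by construction $\underline x\in T_{\rho(\underline x)}(N)$ and $\rho(\underline x)$ is standard. Applying this to $\underline z$ produces $\pi=FG$, where $F:=\pi|_{\{1,\ldots,n\}}$ and $G$ is the shift of $\pi|_{\{n+1,\ldots,n+m\}}$ to $\{1,\ldots,m\}$. The order-reversing convention in $T_\sigma$ matches the one in $\rho$: for $i,j\le n$ one has $\mop{std}(F)(i)<\mop{std}(F)(j)\Leftrightarrow F(i)<F(j)\Leftrightarrow s_i>s_j\Leftrightarrow \sigma(i)<\sigma(j)$, whence $\mop{std}(F)=\sigma$ and likewise $\mop{std}(G)=\tau$, while $FG=\pi$ is standard. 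Conversely, every $\pi=FG$ with $\mop{std}(F)=\sigma$, $\mop{std}(G)=\tau$ and $FG$ standard arises this way, and then $T_\pi(N)\subset T_\sigma(N)\times T_\tau(N)$ because the first $n$ (resp.\ last $m$) coordinates of any point of $T_\pi(N)$ lie in $T_{\mop{std}(F)}(N)=T_\sigma(N)$ (resp.\ $T_{\mop{std}(G)}(N)=T_\tau(N)$). Together with disjointness of distinct $T_\pi(N)$ inherited from the partition, this gives the first displayed equality. The hypothesis $N\ge n+m$ guarantees $N\ge s$ for every such $\pi$ (since $\pi$ surjects onto $\{1,\ldots,s\}$ with $s\le n+m$), so each $T_\pi(N)$ is genuinely nonempty; when $N<n+m$ some collapse, which is exactly the content of the closing remark.

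For the three-way refinement I would translate the comparison of minima into a comparison of maxima. In $T_\sigma$ the smallest coordinate of $\underline s$ sits on the fibre where $\sigma$ is largest, so under $\rho$ the quantity $\min\underline s$ corresponds to $\max(F)$ and $\min\underline t$ to $\max(G)$; concretely, the global minimum of $\underline z$ is the coordinate carrying the maximal value $s$ of $\pi=FG$, and $\max\bigl(\max(F),\max(G)\bigr)=s$. Hence $\min\underline s<\min\underline t$ holds exactly when the value $s$ is attained only among the first $n$ coordinates, i.e.\ $\max(F)>\max(G)$; similarly $\min\underline s>\min\underline t\Leftrightarrow \max(F)<\max(G)$ and $\min\underline s=\min\underline t\Leftrightarrow \max(F)=\max(G)$. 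Restricting the bijection of the first part to each of these three mutually exclusive and exhaustive cases yields the three stated identities for $\ltimes$, $\rtimes$ and $\bowtie$, and their disjoint union recovers the full product.

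The only genuine subtlety, and the step I would treat most carefully, is the consistent handling of the order reversal: $T_\sigma$ reverses the natural order (larger coordinate $\leftrightarrow$ smaller value of $\sigma$), whereas $\mop{std}$ preserves it. One must verify that these two conventions are compatible so that $\min\leftrightarrow\max$ and $\mop{std}(F)=\sigma$ come out with the correct rather than the opposite inequalities; once this bookkeeping is pinned down, the rest is a direct application of the partition $\{0,\ldots,N-1\}^{n+m}=\coprod_\pi T_\pi(N)$.
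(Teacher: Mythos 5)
Your proposal is correct and takes essentially the same route as the paper: your map $\rho$ applied to the concatenated tuple $\underline z$ is exactly the surjection $\gamma\in\mathrm{ST}_{n+m}$ the paper builds by ordering the $u_j$'s from largest to smallest, followed by the same verification that $\mathrm{std}(F)=\sigma$, $\mathrm{std}(G)=\tau$, and the same translation $\min\underline s\lessgtr\min\underline t\Leftrightarrow\max(F)\gtrless\max(G)$ for the three-way splitting. Your explicit treatment of the converse inclusion $T_{FG}(N)\subset T_\sigma(N)\times T_\tau(N)$, of disjointness, and of the role of $N\ge n+m$ for nonemptiness only spells out details the paper leaves implicit.
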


\begin{proof}
Take any $\underline u=(\underline s,\underline t)=(u_1,\ldots,u_{n+m})$ in $T_\sigma(N)\times T_\tau(N)$, and order the $u_j$'s from largest to smallest. This uniquely defines a surjection $\gamma\in\mop{ST}_{n+m}$, by sending the largest $u_j$'s on $1$, the second largest ones on $2$ and so on. The standardization of $F=\gamma\restr{\{1,\ldots, n\}}$ (resp. $G=\gamma\restr{\{n+1,\ldots,n+m\}}$) is equal to $\sigma$ (resp. $\tau$). Then obviously $\underline u\in T_\gamma(N)$, which proves the first assertion. If moreover $\mop{min}\underline s<\mop{min}\underline t$, then $\mop{max}F>\mop{max}G$, and similarly if $\mop{min}\underline s>\mop{min}\underline t$ or $\mop{min}\underline s=\mop{min}\underline t$, which proves the lemma.
\end{proof}

\noindent
Now let $a\in\Cal A$, and let $\wt F_a:\mopg{WQSym}\to \Cal A$ the linear map defined for any $\sigma\in\mop{ST}_n$ by
\begin{equation}
	\wt F_a(\sigma)(N):=D\left(N\mapsto \sum_{T_\sigma(N)}a(s_1)\cdots a(s_n)\right).
\end{equation}
\begin{thm}\label{thm:tridend}
The map $\wt F_a:\mopg{WQSym}\to \Cal A$ defined above is a tridendriform algebra morphism, and we have
\begin{equation}
	F_a=\wt F_a\circ\Psi^*,
\end{equation}
where $F_a:\Cal T_{pl}^{red}\to\Cal A$ is the unique tridendriform algebra morphism such that $F_a(\raise 3pt\hbox{$\scalebox{0.8}\tree$})=a$. In other words, we have the following commutative diagram of tridendriform algebra morphisms:
\diagramme{
\xymatrix{
\Cal T_{pl}^{red} \ar[r]^{F_a}\ar[d]^{\Psi^*} & \Cal A	\\
\mopg{WQSym} \ar[ur]_{\wt F_a}}
}

\end{thm}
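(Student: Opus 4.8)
The plan is to prove that $\wt F_a$ is a tridendriform morphism and that the triangle commutes, and the natural strategy is to establish the morphism property first and then deduce commutativity essentially for free from the universal (freeness) property already set up in the excerpt. First I would verify that $\wt F_a$ respects the three products $\prec,\succ,\cdot$. The key input is Lemma~\ref{decoupage}, which splits the product $T_\sigma(N)\times T_\tau(N)$ of partial diagonals into the three pieces $\ltimes,\rtimes,\bowtie$ indexed precisely by the condition $\mop{max}(F)>\mop{max}(G)$, $\mop{max}(F)<\mop{max}(G)$, $\mop{max}(F)=\mop{max}(G)$ respectively. These are exactly the conditions appearing in the definitions of $\prec$, $\succ$ and $\cdot$ on $\mopg{WQSym}$. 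So the proof of the morphism property is a bookkeeping translation: one writes out $\wt F_a(\sigma)\,\square\,\wt F_a(\tau)$ for each of the three tridendriform products $\square$ on $\Cal A$ (recalling $f\prec g=fS(g)$, $f\succ g=S(f)g$, $f\cdot g=fg$), expresses the summation/difference operators $S,D$ in terms of sums over the hypercubes $\{0,\ldots,N-1\}^n$, and matches the resulting index sets to the three pieces of Lemma~\ref{decoupage}.

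The main technical obstacle is the interplay between the operator $D$ built into the definition of $\wt F_a$ and the operator $S$ appearing in the products on $\Cal A$. Concretely, one must check that applying $D$ after summing over $T_\sigma(N)$ is compatible with the Rota--Baxter/summation structure. I would handle this by introducing the auxiliary quantity $G_\sigma(N):=\sum_{T_\sigma(N)}a(s_1)\cdots a(s_n)$, so that $\wt F_a(\sigma)=D(G_\sigma)$, and observing that $S\bigl(\wt F_a(\sigma)\bigr)(N)=G_\sigma(N)-G_\sigma(0)=G_\sigma(N)$ since $G_\sigma(0)=0$ (all $T_\sigma(N)$ are empty for $N=0$ by the stated convention). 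Thus $S\circ\wt F_a$ recovers the ``undifferentiated'' sum over the partial diagonal, which is exactly the object that multiplies cleanly via Lemma~\ref{decoupage}. With this identity in hand, each of the three products on $\Cal A$ translates: for instance $\wt F_a(\sigma)\succ\wt F_a(\tau)=S(\wt F_a(\sigma))\,\wt F_a(\tau)=G_\sigma\cdot D(G_\tau)$, and after a discrete Leibniz/Abel-summation manipulation the right-hand side becomes $D$ applied to the sum over $T_\sigma(N)\rtimes T_\tau(N)$, which by the lemma equals $\wt F_a(\sigma\succ\tau)$. The care needed is entirely in tracking which variable the outer difference $D$ acts on and confirming the boundary terms vanish; the modified Leibniz rule $\delta(fg)=\delta(f)g+f\delta(g)+\delta(f)\delta(g)$ is what makes the $\cdot$ and $\succ$ cases close.

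Having shown $\wt F_a$ is a tridendriform morphism, the commutativity $F_a=\wt F_a\circ\Psi^*$ follows by uniqueness. Since $\Cal T_{pl}^{red}$ is the free tridendriform algebra on one generator $\raise 3pt\hbox{$\scalebox{0.8}\tree$}$ (as recalled in \S\ref{sssect:DendAlgTree}), any two tridendriform morphisms out of it agree as soon as they agree on the generator. The map $\wt F_a\circ\Psi^*$ is a composite of tridendriform morphisms (Proposition on $\Psi^*$ together with the freshly proved morphism property of $\wt F_a$), hence is itself a tridendriform morphism. It therefore suffices to evaluate it on $\raise 3pt\hbox{$\scalebox{0.8}\tree$}$: by definition $\Psi^*(\raise 3pt\hbox{$\scalebox{0.8}\tree$})$ is the sum of all surjections mapping to the one-vertex tree, which is the single surjection $\sigma\in\mop{ST}_1$, and a direct computation gives $\wt F_a(\sigma)(N)=D(N\mapsto\sum_{s=0}^{N-1}a(s))=a(N)$, so that $\wt F_a(\Psi^*(\raise 3pt\hbox{$\scalebox{0.8}\tree$}))=a=F_a(\raise 3pt\hbox{$\scalebox{0.8}\tree$})$. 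By freeness the two morphisms coincide on all of $\Cal T_{pl}^{red}$, which is precisely the asserted commutativity of the diagram, completing the proof.
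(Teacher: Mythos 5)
Your proposal is correct and takes essentially the same route as the paper: the morphism property is proved exactly as there, by applying $S$ to both sides, invoking Lemma \ref{decoupage} to match the index sets for each of $\prec$, $\succ$, $\cdot$, and then applying $D$, while the commutativity $F_a=\wt F_a\circ\Psi^*$ follows from freeness of the tridendriform algebra on one generator, which the paper leaves implicit in the word ``unique''. Your explicit checks that $S\bigl(\wt F_a(\sigma)\bigr)=G_\sigma$ because $G_\sigma(0)=0$, and that $\wt F_a\circ\Psi^*$ sends the generator to $a$, are details the paper's terse proof omits, and they are the right ones.
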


\begin{proof}
By direct computation: take $\sigma\in\mop{ST}_n$ and $\tau\in\mop{ST}_m$. Then, using Lemma \ref{decoupage},
\allowdisplaybreaks{
\begin{eqnarray*}
	S\big(\wt F_a(\sigma\prec\tau)\big)(N) &=& \sum_{{\scriptstyle\smop{std}(F)=\sigma,\,\smop{std}(G)=\tau,\atop 
	{\scriptstyle FG\smop{ standard} \atop \smop{ max}(F)>\smop{max}(G)}}} S\big(\wt F_a(FG)\big)(N)\\
								  &=& \sum_{{\scriptstyle\smop{std}(F)=\sigma,\,\smop{std}(G)=\tau,\atop 
								  {\scriptstyle FG\smop{ standard} \atop \smop{ max}(F)>\smop{max}(G)}}}
								  \sum_{T_{FG}(N)}a(s_1)\cdots a(s_{n+m})\\
								  &=& \sum_{T_\sigma(N)\ltimes T_\tau(N)}a(s_1)\cdots a(s_{n+m})\\
								  &=& S\Big(\wt F_a(\sigma)S\big(\wt F_a(\tau)\big)\Big)(N)\\
								  &=&S\big(\wt F_a(\sigma)\prec\wt F_a(\tau)\big)(N).
\end{eqnarray*}}
The conclusion follows by applying $D$ to both sides. The computation for $\succ$ and $\cdot$ is completely similar.
\end{proof}

\begin{cor}
The element $\Omega'(a)=\log^*\big(X(a)\big)$ in $\Cal A$, where $X(a)$ is the solution of the linear dendriform equation $X(a)=\un+a\prec X(a)$, is formally given by the series
\begin{equation*}
	\Omega'(a)=\sum_{n>0}\sum_{\sigma\in\smop{ST}_n}
	\frac{(-1)^{d(\sigma)}}{n{n-1\choose d(\sigma)}}\wt F_a(\sigma).
\end{equation*}
Similarly, the element $\overline\Omega'(a)=\log^*\big(\overline X(a)\big)$ in $\Cal A$, where $\overline X(a)$ is the solution of the linear dendriform equation $\overline X(a)=\un+a\preceq \overline X(a)$, is formally given by the series

\begin{equation*}
	\overline\Omega'(a)=\sum_{n>0}\sum_{\sigma\in\smop{ST}_n}
		\frac{(-1)^{\overline d(\sigma)}}{n{n-1\choose \overline d(\sigma)}}\wt F_a(\sigma).
\end{equation*}
\end{cor}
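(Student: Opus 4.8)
The plan is to obtain the statement purely by functoriality, pushing the universal closed formula of Theorem~\ref{thm:closed} forward along the morphisms already available. To keep the roles clear, let me temporarily write $\Omega'_0$ and $\overline\Omega'_0$ for the two elements of Theorem~\ref{thm:closed}, which live in the completion of the free tridendriform algebra $\Cal T_{pl}^{red}$, reserving $\Omega'(a)$ and $\overline\Omega'(a)$ for their counterparts in the sequence algebra $\Cal A$.

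First I would establish that $\Omega'(a)=F_a(\Omega'_0)$, and likewise $\overline\Omega'(a)=F_a(\overline\Omega'_0)$. Since $F_a:\Cal T_{pl}^{red}\to\Cal A$ is a morphism of tridendriform algebras sending $\raise 3pt\hbox{$\scalebox{0.8}\tree$}$ to $a$, it respects the associative product $*$ of \eqref{dendassoc}; being graded for the leaf-number filtration it is continuous, extends to the completions, and hence commutes with $\exp^*$ and $\log^*$. Applying $F_a$ to the universal equation $X=\un+\raise 3pt\hbox{$\scalebox{0.8}\tree$}\prec X$ gives $F_a(X)=\un+a\prec F_a(X)$, so by uniqueness of the solution in the complete filtered algebra one gets $F_a(X)=X(a)$. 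Therefore $F_a(\Omega'_0)=F_a(\log^* X)=\log^*\bigl(F_a(X)\bigr)=\log^*\bigl(X(a)\bigr)=\Omega'(a)$, and symmetrically for the barred element.

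Next I would expand. Substituting the closed form of Theorem~\ref{thm:closed} and using linearity and continuity of $F_a$ yields
\[
	\Omega'(a)=\sum_{n>0}\ \sum_{t\in\Cal T_{pl}^{red},\,|t|=n}\frac{(-1)^{d(t)}}{n\binom{n-1}{d(t)}}\,F_a(t).
\]
Now I invoke the factorization $F_a=\wt F_a\circ\Psi^*$ of Theorem~\ref{thm:tridend} together with the definition $\Psi^*(t)=\sum_{\Psi(f)=t}f$, so that $F_a(t)=\sum_{\Psi(f)=t}\wt F_a(f)$, and insert this into the display above.

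The final step is to reindex the resulting double sum. Because $\Psi:\mop{ST}_n\surj(T_{pl}^{red})_n$ is surjective and a surjection $\sigma\in\mop{ST}_n$ corresponds to a planar reduced tree with $n+1$ leaves (so $|\Psi(\sigma)|=n$), the fibers $\Psi^{-1}(t)$, as $t$ ranges over the degree-$n$ trees, partition $\mop{ST}_n$. Hence summing over $t$ with $|t|=n$ and then over $f\in\Psi^{-1}(t)$ is exactly a single sum over $\sigma\in\mop{ST}_n$; moreover on each fiber the coefficient is unchanged, since $d(\sigma)=d(\Psi(\sigma))$ by \eqref{descentes}. This collapses the double sum to the asserted series for $\Omega'(a)$, and the computation for $\overline\Omega'(a)$ is verbatim the same, replacing $d$ by $\overline d$ throughout and using the weak-descent equality of \eqref{descentes}. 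I expect no genuine obstacle here: the only point deserving care is the first step, that $F_a$ commutes with $\log^*$ and carries the universal solution $X$ to $X(a)$, which is a soft argument resting on $F_a$ being a graded, hence continuous, morphism for $*$; everything after that is bookkeeping dictated by \eqref{descentes} and the surjectivity of $\Psi$.
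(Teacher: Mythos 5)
Your proposal is correct and is essentially the paper's own argument written out in full: the paper proves this corollary in one line as ``an immediate consequence of \cite[Corollary 6]{EM5}, Theorem \ref{thm:tridend} and \eqref{descentes}'', which is exactly your chain of pushing the universal tree formula (Theorem \ref{thm:closed}, itself derived from \cite[Corollary 6]{EM5}) forward along $F_a=\wt F_a\circ\Psi^*$ and reindexing over the fibers of $\Psi$ using $d(\sigma)=d\big(\Psi(\sigma)\big)$ and $\overline d(\sigma)=\overline d\big(\Psi(\sigma)\big)$. The only part you spell out that the paper leaves implicit is the soft functoriality step (that $F_a$, extended to the $h$-adic completion, commutes with $\log^*$ and carries the universal solution $X$ to $X(a)$ by uniqueness of the fixed point), and your treatment of it is sound at the paper's formal level of rigor.
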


\begin{proof}
This is an immediate consequence of \cite[Corollary 6]{EM5}, Theorem \ref{thm:tridend} and \eqref{descentes}.
\end{proof}


\subsection{The discrete Mielnik--Pleba\'nski--Strichartz formula}
\label{ssect:MPSformula}

\begin{cor}[Discrete Mielnik--Pleba\'nski--Strichartz formula]
The ``discrete Magnus elements" $\Omega(a)=S\big(\Omega'(a)\big)$ and $\overline \Omega(a)=S\big(\overline \Omega'(a)\big)$ are given by the series
\begin{eqnarray}
	\Omega(a)(N)&=&\sum_{n>0}\sum_{\sigma\in\smop{ST}_n}\frac{(-1)^{d(\sigma)}}{n{n-1\choose d(\sigma)}}
					\sum_{T_	\sigma(N)}a(s_1)\cdots a(s_n),		\label{dmps1}\\
	\overline \Omega(a)(N)&=&\sum_{n>0}\sum_{\sigma\in\smop{ST}_n}\frac{(-1)^{\overline d(\sigma)}}{n{n-1\choose
	 \overline d(\sigma)}}\sum_{T_\sigma(N)}a(s_1)\cdots a(s_n).		\label{dmps2}
\end{eqnarray}
\end{cor}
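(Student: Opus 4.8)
The plan is to derive both identities directly from the preceding Corollary by applying the summation operator $S$ to the two series it provides, and then to exploit the elementary relation $SDf(N)=f(N)-f(0)$ recorded in Subsection~\ref{ssect:tridendSeq}. Since $\Omega(a)=S\big(\Omega'(a)\big)$ and $\overline\Omega(a)=S\big(\overline\Omega'(a)\big)$ by definition, once $S$ has been pushed inside the series term by term the whole matter reduces to evaluating $S\big(\wt F_a(\sigma)\big)$ for a single surjection $\sigma\in\mop{ST}_n$.

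The key step I would carry out is the following computation. Writing $g_\sigma(N):=\sum_{T_\sigma(N)}a(s_1)\cdots a(s_n)$, the very definition of $\wt F_a$ says that $\wt F_a(\sigma)=D(g_\sigma)$; applying $S$ and using $SDf(N)=f(N)-f(0)$ then gives $S\big(\wt F_a(\sigma)\big)(N)=g_\sigma(N)-g_\sigma(0)$. I would next observe that the boundary term $g_\sigma(0)$ vanishes, since by convention every partial diagonal $T_\sigma(0)$ is empty and $g_\sigma(0)$ is therefore an empty sum. This leaves $S\big(\wt F_a(\sigma)\big)(N)=\sum_{T_\sigma(N)}a(s_1)\cdots a(s_n)$, which is precisely the inner sum appearing in \eqref{dmps1} and \eqref{dmps2}.

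Substituting this identity into the expressions for $\Omega'(a)$ and $\overline\Omega'(a)$ furnished by the previous Corollary will then reproduce the two announced series verbatim, the coefficients $\frac{(-1)^{d(\sigma)}}{n\binom{n-1}{d(\sigma)}}$ and $\frac{(-1)^{\overline d(\sigma)}}{n\binom{n-1}{\overline d(\sigma)}}$ being carried through unchanged by linearity. I expect the only real obstacle — and it is a very mild one — to be the bookkeeping justifying that $S$ may be applied to $\Omega'(a)$ and $\overline\Omega'(a)$ term by term. The point to record is that $S$ preserves the grading by the number of copies of $a$ (equivalently the index $n$), hence is continuous for the associated filtration topology in which these formal series converge; the term-by-term application of $S$ is therefore legitimate, and together with the vanishing of the boundary term $g_\sigma(0)$ this closes the argument.
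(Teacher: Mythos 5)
Your proposal is correct and matches the paper's intended argument: the paper states this corollary without further proof as an immediate consequence of the preceding corollary, and your computation $S\big(\wt F_a(\sigma)\big)(N)=g_\sigma(N)-g_\sigma(0)=\sum_{T_\sigma(N)}a(s_1)\cdots a(s_n)$, using $SDf(N)=f(N)-f(0)$ together with the stated convention that all $T_\sigma(0)$ are empty, is exactly the justification the authors leave implicit. Your remark on term-by-term application of $S$ (continuity with respect to the filtration by the number of copies of $a$) is a harmless extra precaution consistent with the formal-series setting of the paper.
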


\begin{rmk}
{\rm
Contrarily to what happens in the continuous case \cite{EM5}, the rewriting of $\Omega(a)$ and $\overline\Omega(a)$ as Lie elements is not obvious. Hence, the representation of \eqref{dmps1} and \eqref{dmps2} in terms of Lie brackets is rather involved. This is related to the fact that the pre-Lie products $\rhd$ and $\underline\rhd$ in a Rota--Baxter algebra cannot be expressed in terms of the Lie bracket and the Rota--Baxter operator alone, unless the weight $\theta$ is equal to zero. It is at this point where the post-Lie structure enters the picture. We plan to address this in detail in a forthcoming paper.}
\end{rmk}

\bigskip


\end{document}